\date{12 May 2008} % %
\title{Growth of the Number of Spanning Trees\\ of the Erd\H{o}s-R\'enyi Giant Component}
\author{Russell Lyons
\thanks{Partially supported by NSF grants DMS-0406017 and DMS-0705518 and
Microsoft Research.}
\thanks{
Department of Mathematics,
Indiana University,
Bloomington, IN 47405-5701. Email: \nobreak rdlyons@indiana.edu.}
\and Ron Peled \thanks{Supported by Microsoft Research and NSF grant DMS-0605166.}
\thanks{Department of Statistics, UC Berkeley. Email: \nobreak peledron@stat.berkeley.edu.} \and Oded Schramm\thanks{Microsoft Research.}}
\def\thmenv#1#2#3{\begin{#1} \label{#1:#2} #3 \end{#1}}
\def\procl#1.#2 #3\endprocl{%
       \ifx#1t\thmenv{theorem}{#2}{#3}\fi
       \ifx#1l\thmenv{lemma}{#2}{#3}\fi
       \ifx#1p\thmenv{proposition}{#2}{#3}\fi
       \ifx#1c\thmenv{corollary}{#2}{#3}\fi
       \ifx#1d\thmenv{definition}{#2}{#3}\fi
       \ifx#1g\thmenv{conjecture}{#2}{#3}\fi
       \ifx#1q\thmenv{question}{#2}{#3}\fi
       \ifx#1r\thmenv{remark}{#2}{{\rm #3}}\fi
    }%
\def\rref#1.#2/{%
      \ifx #1sSection~\ref{sect:#2}\fi
      \ifx #1tTheorem~\ref{theorem:#2}\fi
      \ifx #1lLemma~\ref{lemma:#2}\fi
      \ifx #1cCorollary~\ref{corollary:#2}\fi
      \ifx #1pProposition~\ref{proposition:#2}\fi
      \ifx #1dDefinition~\ref{definition:#2}\fi
      \ifx #1gConjecture~\ref{conjecture:#2}\fi
      \ifx #1qQuestion~\ref{question:#2}\fi
      \ifx #1e(\ref{eq:#2})\fi
      \ifx #1b(\cite{#2})\fi
        }
\def\rlabel e.#1 #2{\begin{equation} \label{eq:#1} #2 \end{equation}}
\def\proof{\medbreak\noindent{\it Proof.\enspace}}
\def\proofof #1.#2 {\medbreak\noindent
     {\it Proof of \rref #1.#2/.}\enspace}
\newif\iffigures\figurestrue
\newif\ifhyper\IfFileExists{hyperref.sty}{\hypertrue}{\hyperfalse}
\ifhyper\usepackage{hyperref}
\def\hitem#1#2{\item[\hypertarget{#1}{#2}]\expandafter\gdef\csname LBL#1ITM\endcsname{#2}}
\def\iref#1{\hyperlink{#1}{\csname LBL#1ITM\endcsname}}
\def\hitem#1#2{\item[{#2}]\expandafter\gdef\csname LBL#1ITM\endcsname{#2}}
\def\iref#1{{\csname LBL#1ITM\endcsname}}
\newif\ifdraft
\long\def\note#1/{\ifdraft {\bf [#1]}\fi}
\long\def\comment#1{}
\long\def\old#1{}
\numberwithin{equation}{section}
\numberwithin{figure}{section}
\newtheorem{theorem}{Theorem}
\numberwithin{theorem}{section}
\newtheorem{corollary}[theorem]{Corollary}
\newtheorem{lemma}[theorem]{Lemma}
\theoremstyle{remark}
\theoremstyle{remark}\newtheorem{remark}[theorem]{Remark}
\let\qqed=\qed
\def\QED{\qqed\medskip}
\let\qed=\QED
\newcommand{\R}{\mathbb{R}}
\newcommand{\Z}{\mathbb{Z}}
\newcommand{\N}{\mathbb{N}}
\def\Im{{\rm Im}\,}
\def\Re{{\rm Re}\,}
\def\SLEkk#1/{$\mathrm{SLE}(#1)$}
\def\SLEr#1/{$\mathrm{SLE(\kappa;#1)}$}
\def\SLEkr#1;#2/{$\mathrm{SLE(#1;#2)}$}
\def\SLEk/{\SLEkk{\kappa}/}
\def\SLEtwo/{\SLEkk2/}
\def\SLE/{$\mathrm{SLE}$}
\def\SLEab/{\SLEkr 4; {a/\hco-1}, {b/\hco-1}/}
\def\Ito/{It\^o}
\def \P {{\bf P}}
\def\md{\mid}
\def\Bb#1#2{{\def\md{\bigm| }#1\bigl[#2\bigr]}}
\def\Pb{\Bb\P}
\def\Eb{\Bb\E}
\def \E {{\bf E}}
\def\les{\le_1}
\def\sles{\les^{\mathcal L}}
\def \proof {{ \medbreak \noindent {\bf Proof.} }}
\def\proofof#1{{ \medbreak \noindent {\bf Proof of #1.} }}
\def\FF{{\rm F}}
\def\II{{\rm I}}
\def\bl{\bigl}\def\br{\bigr}\def\Bl{\Bigl}\def\Br{\Bigr}
\def\LWbuild{MR2060629}
\def\Lest{MR2160416}
\def\PitmanEnum{MR1630413}
\def\Bollobas{MR2002j:05132}
\def\AldousPitman{MR1641670}
\def\Piau98{MR1634413}
\def\Ltrent{Lyons:trent}
\def\BSpyond{MR97j:60179}
\def\ErdRen{MR0125031}
\def\HoJa{MR2035627}
\def\GW{\PGW}
\def\cp{\tau}   % complexity
\def\G{{\cal G}}
\def\PGW{\textsf{PGW}}
\def\rtd{\rho}  % the measure on rooted graphs
\def\gh{G}  % graph
\def\bp{o}
\def\CO#1{[#1)}        %% while not leaving unmatched parentheses.
\def\verts{\textsf{V}}
\newcommand{\nchoosek}[2]{\begin{pmatrix}#1\\#2\end{pmatrix}}
\def\G{{\mathcal G}}
\def\pc{p_{\rm c}}
\def\noopsort#1{}
\newcommand{\W}{\mathcal{W}}
\begin{document}
\maketitle

\begin{abstract}
The number of spanning trees in the giant component of the random graph
$\G(n, c/n)$ ($c>1$) grows like $\exp\big\{m\big(f(c)+o(1)\big)\big\}$ as $n\to\infty$,
where $m$ is the number of vertices in the giant component.
The function $f$ is not known explicitly, but we show that it is
strictly increasing and infinitely differentiable.
Moreover, we give an explicit lower bound on $f'(c)$.
A key lemma is the following.
Let $\GW(\lambda)$ denote a Galton-Watson tree having Poisson
offspring distribution with parameter $\lambda$.
Suppose that $\lambda^*>\lambda>1$. We show that
$\GW(\lambda^*)$ conditioned to survive forever stochastically
dominates $\GW(\lambda)$ conditioned to survive forever.
\end{abstract}

\section{Introduction}

Methods of enumeration of spanning trees in a finite graph $\gh$ and relations
to various areas of mathematics and physics have been investigated for more
than 150 years.
The number of spanning trees is often called the {\bf complexity} of the
graph, denoted here by $\cp(\gh)$.
The usual Erd\H{o}s-R\'enyi
model of random graphs, $\G(n, p)$, is a graph on $n$ vertices,
each pair of which is connected by an edge with probability $p$,
independently of other edges.
Fix $c > 1$.
It is well known that with probability approaching 1 as $n \to\infty$,
the largest component of $\G(n,c/n)$ has size proportional to $n$,
while the second largest component is of logarithmic size.
(See, e.g., \cite{\ErdRen} or \cite{\Bollobas}.)
The largest component is thus called the {\bf giant component}
and will be denoted by $\gh_n=\gh_n(c)$.
As an example of a general theory, \cite{\Lest} proved that there is a
number $f(c)$ such that
$$
f(c)
=
\lim_{n \to\infty} \frac{1} {|\verts(\gh_n)|}  \log \cp(\gh_n)
$$
in probability.
In the same paper it was shown that $f(c) > 0$ for $c > 1$, that $f(1^+) = 0$, and
that $f$ is continuous on $\CO{1,\infty}$.
\cite{\Lest} asked whether $f$ is strictly increasing 
and real analytic on $(1,\infty)$.
Note that as $c$ increases, both the number of trees $\cp(\gh_n)$ as well
as the number of vertices $|\verts(\gh_n)|$
increase, so that it is not clear which increase dominates.
Here we prove that $f$ is strictly increasing and $C^\infty$; prior to our
work, it was not known even that $f$ was non-decreasing.

Let $\PGW(c)$ be
the law of a rooted Galton-Watson tree $(T, o)$
with Poisson($c$) offspring distribution.
Write $\PGW^*(c)$ for the law of
$\PGW(c)$ conditioned on non-extinction.
Sometimes we also write this measure as $\PGW^*_c$.
The event of extinction has probability $q(c)$, which is well known to be
the smallest positive solution of the equation
\rlabel e.extinct
{q(c) = e^{-c (1-q(c))}
\,.
}
% %
% \rlabel e.extinct
% {q(c) = e^{-c \theta(c)}
% \,,
% }
% %
% where
% $$
% \theta(c) := 1 - q(c)
% \,.
% $$
Let $p_k(x;\gh)$ denote the probability that simple random walk on a graph
$\gh$ started at a vertex $x$ is back at $x$ after $k$ steps.
\cite{\Lest} proved that
\rlabel e.treeent
{f(c)
=
\int \Big(\log \deg_T(\bp) - \sum_{k \ge 1} \frac{1 }{ k} p_k(\bp;T)\Big)
\,d\PGW^*_c(T, \bp)
\,.
}

\procl t.incrH
The function $f$ is strictly increasing and $C^\infty$ on $(1, \infty)$.
In fact,
$$
f'(c) >
\frac{(c-1)e^{-c q(c) }}{c^2}
> 0
$$
for $c > 1$.
\endprocl

From \rref e.treeent/,
it is not hard to see that for any $\epsilon > 0$, we have that 
$\big(f(c+ \epsilon) - f(c)\big)/\epsilon \sim 1/c$ as $c \to\infty$.
Since 
\rlabel e.duality
{c e^{-c} = c q(c) e^{-c q(c)}}
and the function $x \mapsto x e^{-x}$
is unimodal in $(0, \infty)$ and vanishes at 0 and $\infty$,
it follows that $\lim_{c \to\infty}
c q(c) = 0$. Using this, we find that
our lower bound for $f'(c)$ in \rref t.incrH/ has the same asymptotic,
$1/c$, as $c \to\infty$.
We do not have any information on $f'(1)$.

A key lemma to prove \rref t.incrH/ is the following:

\begin{theorem}\label{t.main}
If $c'>c\ge 1$, then \emph{$\GW^*(c')$} stochastically dominates
\emph{$\GW^*(c)$}.
\end{theorem}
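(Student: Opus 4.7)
The plan is to prove the domination by constructing, for each $c'>c\geq 1$, a coupling $(T,T')$ with $T\sim\GW^*(c)$, $T'\sim\GW^*(c')$, and $T$ appearing as a rooted subtree of $T'$ almost surely. The natural framework is the skeleton-and-bushes decomposition: a $\GW^*(\lambda)$-tree has $K$ ``surviving'' root-children (each with an independent $\GW^*(\lambda)$-subtree) and $D$ ``dying'' root-children (each with an independent $\GW(\lambda q(\lambda))$-subtree), where $K\sim\mathrm{Poisson}(\lambda(1-q(\lambda)))$ conditioned on $K\geq 1$ and $D\sim\mathrm{Poisson}(\lambda q(\lambda))$ are independent. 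Two key monotonicities follow from \rref e.extinct/ and \rref e.duality/: the surviving-parameter $\lambda(1-q(\lambda))$ is strictly increasing in $\lambda>1$, while the dying-parameter $\lambda q(\lambda)$ is strictly \emph{decreasing}, because $xe^{-x}$ is unimodal with peak at $1$.

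The surviving part of the coupling is straightforward: truncated Poisson distributions are monotone in the likelihood-ratio order in their parameter, so one can realize $K_c\leq K_{c'}$ almost surely, match each surviving $c$-child to a surviving $c'$-child, and recurse using the claim itself. The $K_{c'}-K_c$ leftover surviving $c'$-subtrees remain, each an independent $\GW^*(c')$-tree; together with the $D_{c'}$ dying $c'$-subtrees they form the pool in which the $D_c$ dying $c$-subtrees must be embedded.

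The main obstacle is that dying $c$-bushes are individually stochastically \emph{larger} than dying $c'$-bushes (since $cq(c)>c'q(c')$), so no coupling with the correct marginals achieves pathwise bush-to-bush containment alone. The remedy is to split the $D_c$ dying $c$-children into two groups by a random rule depending on the realized subtree: those whose $\GW(cq(c))$-subtree turns out ``small'' are routed to dying $c'$-children and embedded in the corresponding $\GW(c'q(c'))$-subtree, while those whose subtree is ``large'' are routed to the leftover surviving $c'$-children and embedded in the corresponding infinite $\GW^*(c')$-subtree. The conditional laws of the $c$-subtree on each side may differ, so long as their mixture reproduces $\GW(cq(c))$ marginally, and analogously on the $c'$-side.

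The technical heart, and the main difficulty I anticipate, is showing that such a splitting with the correct marginals exists simultaneously with the auxiliary sub-claim $\GW(cq(c))\leq_{\mathrm{st}}\GW^*(c')$ needed for the routing of ``large'' bushes. These two claims should be proven jointly, presumably by induction on tree depth, with a Hall-type marriage argument at each skeleton vertex distributing $c$-children among the $c'$-child pool while preserving the correct conditional distributions; compactness then yields the infinite-depth coupling. The main balance to check at every step is that the overall counts ($K_c+D_c\leq_{\mathrm{st}} K_{c'}+D_{c'}$ at each skeleton vertex) and the stochastic bounds on the ``small'' and ``large'' fractions of $\GW(cq(c))$ together suffice to make the routing feasible.
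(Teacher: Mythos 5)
Your high-level plan---couple the surviving (type-I) children via monotonicity of the truncated Poisson, and absorb the excess finite bushes of the smaller parameter into the leftover infinite children of the larger parameter---is the same idea that drives the paper's proof, but the two steps you explicitly defer are precisely where all the content lies, and your proposal supplies neither. The feasibility of the routing is a quantitative question: the extra children available on the $c'$ side must be able to absorb the surplus of dying $c$-children, and this is exactly what Lemmas~\ref{lemma:ders} and~\ref{l.decomp} of the paper establish, namely that $Q^*_{c'\theta(c')}$ stochastically dominates $Q^*_{c\theta(c)}+Q_\alpha$ for $\alpha=\alpha\big(c\theta(c),c'\theta(c')\big)\ge cq(c)-c'q(c')$, proved from \rref e.extinct/ and \rref e.duality/ together with an analysis of the ratio of the two mass functions. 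Nothing in your sketch proves this or a substitute; ``Hall-type marriage argument plus induction on depth plus compactness'' is a hope rather than an argument, since Hall's condition here \emph{is} the unverified inequality.

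Second, your routing of ``large'' dying $c$-bushes into leftover $\GW^*(c')$ subtrees rests on the sub-claim $\GW(cq(c))\le_{\mathrm{st}}\GW^*(c')$, to be proved ``jointly'' with the theorem; but that sub-claim faces the same obstruction (bigger-parameter bushes are stochastically smaller at every level), so the joint induction is circular unless something external breaks it. The paper breaks it by refining the bush decomposition by \emph{size} rather than only by type: conditioned on having $n$ vertices a bush has law $T_n$, independent of the parameter; the expected number of size-$k$ bushes, $\E[n_k(T(\lambda))]=(\lambda e^{-\lambda})^k k^{k-1}/k!$, is decreasing in $\lambda\ge 1$ for every fixed $k$, so the size-$k$ counts can be coupled coordinatewise with the total excess an independent Poisson of mean $cq(c)-c'q(c')$; Lemma~\ref{l.decomp} absorbs that excess into the infinite children, and the excess finite bushes are then embedded into infinite $c'$-subtrees using the external input $T_n\sles T_{n+1}$ (Theorem~\ref{t.fin}, Luczak--Winkler) together with the Aldous--Pitman limit giving $T_n\sles T_\infty(1)\sles T_\infty(c')$, after which the recursion of Lemma~\ref{l.les} and Theorem~\ref{t.main} closes the construction. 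Your proposal has no substitute for this input, and without it (or the Poisson comparison above) the splitting you describe cannot be shown to exist, so the proof as proposed has a genuine gap at its central step.
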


Here, $\GW^*(1)$ denotes the weak limit as $c \downarrow 1$ of $\GW^*(c)$;
see \cite{\AldousPitman}, Lemma 23.

We now recall what the stochastic domination referred to in the theorem means.
If $(T,o)$ and $(T',o')$ are rooted trees, we say that $(T,o)$ dominates
$(T',o')$ if there is an isomorphism from $T'$ to a subtree of $T$ that takes
$o'$ to $o$.
A probability measure on the collection of rooted trees is said to
stochastically dominate another probability
measure on the collection of rooted trees if they may be coupled so
that the sample from the first measure a.s.\ dominates the sample from
the second measure.

Of course, $\GW(c')$ dominates $\GW(c)$ when $c'>c$. It is the conditioning
that makes Theorem~\ref{t.main} nontrivial. Indeed, the offspring
distribution that has 1 or 3 children with probability 1/2 each
stochastically dominates the offspring distribution that has 0 or 3
children with probability 1/2 each, but if we condition on survival, the
domination does not persist since conditioning does not change the former,
but forces the latter to have 3 children of the root.

\section{Tree Domination} \label{tree_domination_section}

Let $T_n=T_n(\lambda)$ be a $\GW(\lambda)$ tree conditioned to have $n$
vertices, where $n\in\N_+\cup\{\infty\}$.
We consider the values of $T_n$ to be equivalence classes of rooted trees
under isomorphisms that preserve the root.
It is easy to check that the distribution of $T_n$ does not depend on
$\lambda$.
It turns out that it is the same as the distribution obtained by forgetting
the labels of a uniform tree on $n$ vertices with uniform root.
Also, the probability that a $\GW(\lambda)$ tree has $k$ vertices is given
by the Borel($\lambda$) distribution, namely,
\rlabel e.Borel
{\frac{(\lambda\,e^{-\lambda})^k\,k^{k-1}} {\lambda k!}\,.}
These facts are well known and have a variety of proofs; see
\cite{\PitmanEnum} for some of them.
% Or: Use the asymptotics of the
%expected number of trees of $k$ vertices in
%the Erd\H{o}s-R\'enyi graph $\G(n, \lambda/n)$.

\cite{\LWbuild} show the following:

\begin{theorem}\label{t.fin}
$T_{n+1}$ stochastically dominates $T_n$ for every $n \in \N_+$.
\end{theorem}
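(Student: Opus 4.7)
The plan is to build, along a single probability space, a chain of random rooted trees $(S_n)_{n \ge 1}$ such that $S_n$ has the law of $T_n$ for every $n$ and $S_n$ is (the root-preserving isomorphic image of) a subtree of $S_{n+1}$ almost surely. The existence of such a chain is exactly the stochastic domination asserted, iterated between consecutive values of $n$.

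By the identification of the law of $T_n$ with that of the isomorphism class of a uniformly random rooted labeled tree on $n$ vertices, recalled just before the theorem statement, it suffices to define a Markov kernel $K_n$ on (equivalence classes of) rooted trees that maps a uniform size-$n$ tree to a uniform size-$(n+1)$ tree by attaching a single new leaf to some existing vertex, with the attachment vertex chosen by a randomized rule depending only on the isomorphism class of the current tree. Iterating $K_1, K_2, \ldots$ starting from $S_1 =$ a single vertex then produces the promised chain $S_1 \subset S_2 \subset \cdots$. My approach to constructing $K_n$ would go through its time-reversal: starting from a uniformly random size-$(n+1)$ rooted tree $t'$, choose one of the leaves $\ell$ of $t'$ by a rule invariant under $\mathrm{Aut}(t')$, delete it, and obtain a size-$n$ rooted tree $t$. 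The leaf-selection rule must be calibrated so that the induced law of $t$ is uniform, at which point Bayes' rule determines $K_n$ explicitly.

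The main obstacle is the calibration of this leaf-selection rule. A uniform rule over the leaves of $t'$ generally fails to give a uniform $t$, because different leaves of $t'$ produce isomorphism classes whose labeled-preimage counts differ once automorphisms are taken into account. The correct rule has to reconcile the weights $(n+1)!/|\mathrm{Aut}(t')|$ of size-$(n+1)$ labeled trees and $n!/|\mathrm{Aut}(t)|$ of size-$n$ labeled trees, with the leaf-selection probabilities serving as the reconciling factors. Verifying this reduces to a combinatorial bijection between labelings of $t'$ and pairs consisting of a labeling of $t' \setminus \ell$ together with a placement for the new leaf, in which the automorphism factors drop out cleanly. Once the rule is identified and the identity verified, the inductive construction of $S_n \subset S_{n+1}$ is immediate, and reading off any two consecutive marginals yields the stochastic domination of $T_n$ by $T_{n+1}$.
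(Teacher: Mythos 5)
There is a genuine gap, and it sits exactly at the point you label ``the main obstacle.'' Your reformulation is correct but it is also essentially a restatement of the theorem: since a coupled pair $(S_n,S_{n+1})$ with $S_n$ a rooted subtree of $S_{n+1}$ must differ by the addition of a single leaf, the existence of a leaf-attachment kernel $K_n$ with uniform marginals (equivalently, of a calibrated leaf-deletion rule for the time reversal) is \emph{equivalent} to the stochastic domination $T_n\le T_{n+1}$, by Strassen. So all of the content of the theorem is in the existence of the calibration, and your proposal only asserts it, via an unspecified ``combinatorial bijection in which the automorphism factors drop out cleanly.'' No such clean cancellation is available. The labeled-tree bijection you have in mind (a labeling of $t'$ versus a labeling of $t'\setminus\ell$ plus an attachment site) is the statement that attaching the vertex labeled $n+1$ to a uniform vertex of a uniform labeled tree gives a tree that is uniform \emph{conditioned on $n+1$ being a leaf}; it does not give a uniform $(n+1)$-vertex tree, and that conditioning is precisely the obstruction. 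Concretely, already for $n=3$ the calibrated kernel is forced and is not of automorphism-ratio form: starting from the rooted $3$-path (root--middle--end), the new leaf must be attached to the end, middle, and root with probabilities $9/16$, $9/32$, $5/32$ respectively, and from the rooted cherry one must attach at the root with probability $3/16$; uniform attachment, or any rule read off from $|\mathrm{Aut}|$ ratios, fails. For general $n$ the constraints form a transportation/flow feasibility problem, and proving that a nonnegative solution exists at every step is a genuinely hard result --- it is exactly the Luczak--Winkler theorem on building uniformly random subtrees.

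That theorem is how the paper proceeds: it quotes Luczak--Winkler (their Theorem 4.1), which establishes the corresponding domination for Galton--Watson trees with binomial $(d,1/d)$ offspring conditioned on their size, and then lets $d\to\infty$ to obtain the Poisson (equivalently, uniform rooted Cayley tree) case stated here. If you want a self-contained proof along your lines, you would have to supply the inductive construction of the growth procedure (or an LP/flow feasibility argument valid for all $n$), which is the substance of that paper and is not a routine calibration; as written, your argument proves nothing beyond the trivial equivalence between the domination and the existence of the growth kernel.
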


More precisely, in their Theorem 4.1, for each $d \ge 2$
they show such a statement for conditioned trees having offspring
distribution binomial with parameters $(d, 1/d)$.
Taking a limit as $d \to\infty$ gives Theorem \ref{t.fin}.
It is interesting to note that this is the same as saying that a uniformly
rooted uniform tree on $n+1$ vertices dominates a uniformly rooted uniform
tree on $n$ vertices.

Define $\theta(c)$ as the survival probability of $\GW(c)$, that is,
$$
\theta(c):=1-q(c)\,.
$$
For $\mu>\lambda>0$, set
$$
\alpha(\lambda,\mu):=
\log\frac{e^\mu-1}{\mu}-
\log\frac{e^\lambda-1}{\lambda}>0\,.
$$

\procl l.ders
For $\mu>\lambda>1$, we have
$$
\alpha(\lambda,\mu) < \mu-\lambda
$$
and
$$
\alpha\big(\lambda \theta(\lambda), \mu \theta(\mu)\big)
>
\lambda q(\lambda)- \mu q(\mu)
\,.
$$
\endprocl

\proof
The first inequality states that $x \mapsto \log\big((e^x - 1)/x\big) - x$
is decreasing for $1 < x < \infty$, which in turn is a consequence of the inequality $e^x > 1 + x$.

By \rref e.extinct/, we have
$$
\frac{e^{\lambda \theta(\lambda)}-1}{\lambda \theta(\lambda)}
=
\frac{1 - q(\lambda)}{\lambda q(\lambda) \theta(\lambda)}
=
\frac{1}{\lambda q(\lambda)}
\,.
$$
Therefore 
\rlabel e.simpler
{\alpha\big(\lambda \theta(\lambda), \mu \theta(\mu)\big)
=
\log \big(\lambda q(\lambda)\big) - \log \big(\mu q(\mu)\big)
\,.}
We next note that $x \mapsto xe^{-x}$ is strictly increasing on $(0, 1)$ and strictly decreasing on
$(1,\infty)$. Recalling from \rref e.duality/ that
$\lambda\exp(-\lambda)=\lambda q(\lambda)\exp(-\lambda q(\lambda))$ and
using that $\lambda>1$, we deduce that $\lambda q(\lambda)<1$. We similarly deduce that $\mu q(\mu)<\lambda q(\lambda)$ since $\mu>\lambda$. The second claimed inequality therefore follows from \rref e.simpler/ and
the fact that $x \mapsto \log x - x$ is increasing on $(0, 1)$.
% Since $x \mapsto xe^{-x}$ is increasing on $(0, 1)$ and decreasing on
% $(1,\infty)$ and since $q(c)<1$
% when $c>1$, we have by \rref e.duality/ that $1>\lambda q(\lambda)>\mu q(\mu)$.
% The second claimed inequality therefore follows from \rref e.simpler/ and
% the fact that $x \mapsto \log x - x$ is increasing on $(0, 1)$.
\QED

%% The expected number of finite subtrees, in the conditioned and also
%% in the unconditioned case, is
%% $$
%% \lambda(1-\theta)\,.
%% $$
%%
%% The expected number of infinite trees in the conditioned tree
%% is
%% $$
%% \lambda\,\theta/(1-\exp(-\lambda\,\theta))=\lambda\,.
%% $$

Let $Q_\lambda$ denote a Poisson$(\lambda)$ random variable
and $Q^*_\lambda$ denote
a random variable whose distribution is the same
as that of $Q_\lambda$ conditioned on $Q_\lambda>0$.

\begin{lemma}\label{l.decomp}
Let $\mu>\lambda>0$ and set
$
\alpha=\alpha(\lambda,\mu)
$.
Then $Q^*_\mu$ stochastically dominates the sum of mutually independent
copies of $Q^*_\lambda$ and $Q_\alpha$.
Moreover, this does not hold for any larger $\alpha$.
\end{lemma}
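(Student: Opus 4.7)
The plan is to compare the point-mass functions of the two distributions directly. Writing $p_k := \P(Q^*_\mu = k)$ and $q_k := \P(Q^*_\lambda + Q_\alpha = k)$, I would first compute (via a Poisson convolution, using that $\sum_{j=1}^{k}\binom{k}{j}\lambda^j\alpha^{k-j}=(\lambda+\alpha)^k-\alpha^k$)
\[
p_k = \frac{\mu^k}{k!(e^\mu-1)}, \qquad q_k = \frac{e^{-\alpha}\bigl[(\lambda+\alpha)^k-\alpha^k\bigr]}{k!(e^\lambda-1)}.
\]
Both distributions live on $\N_+$, so stochastic domination $Q^*_\mu\succeq Q^*_\lambda+Q_\alpha$ is equivalent to the cumulative sums $D_m:=\sum_{j=0}^m(q_j-p_j)$ being $\ge 0$ for every $m\ge 0$. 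The defining identity $e^\alpha=\lambda(e^\mu-1)/\bigl(\mu(e^\lambda-1)\bigr)$ gives $q_1=p_1$, so $D_0=D_1=0$; and since $p,q$ are probabilities, $D_m\to 0$ as $m\to\infty$.

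Next I would reduce everything to a single auxiliary sequence: substituting the value of $e^{-\alpha}$ and simplifying yields
\[
q_k-p_k = p_k\bigl(\psi(k)-1\bigr), \qquad \psi(k):=\frac{a^k-b^k}{a-b}, \quad a:=\frac{\lambda+\alpha}{\mu},\ b:=\frac{\alpha}{\mu},
\]
with $0<b<a<1$, $\psi(1)=1$, and $\psi(k)\to 0$. As a preliminary observation, a short computation shows that $g(t):=\log\frac{e^t-1}{t}$ satisfies $g'(t)>\tfrac12$ on $(0,\infty)$; integrating gives $\alpha>(\mu-\lambda)/2$, equivalently $a+b>1$, i.e.\ $\psi(2)>1$.

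The main step, which I expect to be the main obstacle, is showing unimodality of $\psi$ on $\N_+$. The recursion $\psi(k+1)=a\psi(k)+b^k$ shows that $\psi(k+1)<\psi(k)$ is equivalent to $(1-a)\psi(k)>b^k$, and once this inequality holds it propagates:
\[
(1-a)\psi(k+1)=a(1-a)\psi(k)+(1-a)b^k > ab^k+(1-a)b^k = b^k > b^{k+1}.
\]
Combined with $\psi(1)=1<\psi(2)$ and $\psi(k)\to 0$, this yields an integer $k^*\ge 2$ such that $\psi(k)>1$ for $2\le k\le k^*$ and $\psi(k)<1$ for $k>k^*$. Equivalently, $q_k-p_k>0$ for $2\le k\le k^*$ and $q_k-p_k<0$ for $k>k^*$. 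Together with $D_1=0$ and $D_\infty=0$, this forces $D_m$ to remain at $0$ through $m=1$, then strictly increase to a maximum at $m=k^*$, then strictly decrease back to $0$; in particular $D_m\ge 0$ for every $m$, which gives the stochastic domination.

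For the optimality (``moreover'') part: if $\alpha'>\alpha$ then $\P(Q^*_\lambda+Q_{\alpha'}=1)=\frac{\lambda e^{-\alpha'}}{e^\lambda-1}<p_1=\P(Q^*_\mu=1)$, which forces $\P(Q^*_\lambda+Q_{\alpha'}\ge 2)>\P(Q^*_\mu\ge 2)$, so stochastic domination fails already at level $k=2$. Note that this also explains the choice of $\alpha$ in hindsight: it is exactly the value that makes $q_1=p_1$.
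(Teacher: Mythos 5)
Your argument is correct, and its overall skeleton is the same as the paper's: compute the two point-mass sequences, observe that the given $\alpha$ is precisely the value making the masses at $k=1$ agree (which is also how both you and the paper dispose of the ``moreover'' statement), show that the ratio of the two sequences crosses $1$ only once in $k$, and convert this single-crossing property into the CDF/tail inequality by summation. Where you genuinely differ is in the proof of the single-crossing step, which is the heart of the lemma. The paper views the ratio $A\,(B^k-C^k)$ as a function of a \emph{real} variable $k$ and shows by differentiating twice that it has no local minimum, so the set where it is $\ge 1$ is an interval containing $1$. You stay entirely in the integers: the recursion $\psi(k+1)=a\psi(k)+b^k$ shows that ``$\psi$ decreases at $k$'' propagates to $k+1$, giving discrete unimodality, and together with $\psi(1)=1$, $\psi(2)>1$ and $\psi(k)\to 0$ this pins down the sign pattern. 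This is arguably more elementary (no calculus in $k$), and your auxiliary fact $\alpha>(\mu-\lambda)/2$ (from $g'>\tfrac12$), i.e.\ $\psi(2)>1$, is a pleasant observation not in the paper---though it is not strictly needed, since $\psi(2)\le 1$ would make $\psi$ eventually stay below $1$ forever and contradict both sequences summing to $1$. Two small points to tidy: you assert $a=(\lambda+\alpha)/\mu<1$, i.e.\ $\alpha<\mu-\lambda$, without proof; this is needed for $\psi(k)\to 0$ (hence for the finiteness of $k^*$) and follows from $g'(t)<1$ on $(0,\infty)$, equivalently $e^t-1>t$, which is exactly the first inequality of Lemma~\ref{lemma:ders} (whose proof works for all $\mu>\lambda>0$, as required here, even though that lemma is stated for $\mu>\lambda>1$). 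Also, at the single index $k=k^*+1$ one could have $\psi=1$, so ``$q_k-p_k<0$ for $k>k^*$'' should read $\le 0$ there; this does not affect the conclusion $D_m\ge 0$.
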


\proof
Consider some $\beta\in (0,\mu-\lambda)$, and
let $Z$ denote the sum of two mutually independent copies of
$Q^*_\lambda$ and $Q_\beta$. For $k\in\N_+$, set
\rlabel e.sumdist
{a_k  := \Pb{Z=k}
= \frac{e^{-\lambda-\beta}}{1-e^{-\lambda}}
\sum_{j=1}^k \frac{\lambda^j\,\beta^{k-j}}{j!\,(k-j)!}
= \frac{e^{-\beta}}{e^{\lambda}-1}\,\frac{(\lambda+\beta)^k-\beta^k}{k!}
\,,}
and
$$
b_k  := \Pb{Q^*_\mu=k} = \frac{1}{e^\mu-1}\,\frac{\mu^k}{k!}\,.
$$
In order for $Q^*_\mu$ to dominate $Z$, it is necessary that $a_1\ge b_1$.
This translates and simplifies to $\beta\le \alpha$.
Now fix $\beta=\alpha$; note that $\alpha < \mu - \lambda$ by \rref l.ders/.
Let $J$ be the set of $k\in\N_+$ such that
$a_k\ge b_k$. We claim that there is a $k_0\in\R$ such that
$J=\N_+\cap [1,k_0]$. Before proving the claim, we shall demonstrate that the
lemma follows from it. Indeed, to prove domination it is sufficient
to show that
\begin{equation}
\label{e.tail}
\forall k\in\N_+\qquad \sum_{j=k}^\infty b_j\ge \sum_{j=k}^\infty a_j\,.
\end{equation}
This is clear if $k>k_0$, because $b_j>a_j$ for $j>k_0$.
On the other hand, if $k\le k_0$, then $a_j\ge b_j$ for $j\le k$,
which gives
$$
\sum_{j=1}^k b_j\le \sum_{j=1}^k a_j\,.
$$
Now subtracting both sides from $\sum_{j\in\N_+} b_j=1=\sum_{j\in\N_+} a_j$
gives~\eqref{e.tail}.

It remains to prove that $J=\N_+\cap[1,k_0]$ for some $k_0\in\R$.
Note that
$$
\frac{a_k}{b_k}=
\frac{(e^\mu-1)\,e^{-\beta}}
{e^{\lambda}-1}
\,\left(\Bl(\frac{\lambda+\beta}\mu\Br)^k-\Bl(\frac{\beta}{\mu}\Br)^k\right).
$$
Now think of the right-hand side as a function $g(k)$ of positive real $k$.
As such, it may be written in the form $A\,(B^k-C^k)$, with constants
$A,B,C$ satisfying $A>0$
and $1>B>C>0$. We claim that $g$ does not have any local minimum.
Indeed, $g'(k)=A\,(B^k\log B-C^k\log C)$ and
$g''(k)= A\,\bl(B^k(\log B)^2-C^k(\log C)^2\br)$. Therefore,
when $g'(k)=0$, we have $(\log B)/(\log C)=C^k\,B^{-k}$
and so $g''(k)= A\,(\log C)^2\, C^k\,(C^{k}\,B^{-k}-1)<0$.
This verifies that $g$ does not have a local minimum. Hence, the set of
$k\in (0,\infty)$ such that $g(k)\ge 1$ is an interval. By our choice of
$\beta=\alpha$, this interval contains $1$. This proves the claim, and
completes the proof of the lemma.
\QED

Given a rooted tree $T$ with root $\bp$ and a node
$v$ in $T$, let $N(v)$ denote the cardinality of the set
of nodes in the subtree of $T$ corresponding to $v$, that is,
the number of nodes in $T$ that are not in the connected component
of $\bp$ in $T\setminus \{v\}$.
Given a random rooted tree $T$, let $n_k=n_k(T)$ denote
the number of children $v$ of the root satisfying $N(v)=k$.

Let $T(\lambda)$ denote a sample from $\GW(\lambda)$.
Note that
the random variables $\bl(n_k(T(\lambda)):k\in\N_+\cup\{\infty\}\br)$
are independent Poisson random variables.
It follows that the random variables
$ \bl(n_k(T_\infty(\lambda)):k\in\N_+\cup\{\infty\}\br)$
are independent, and $n_k(T_\infty(\lambda))$
has the same law as $n_k(T(\lambda))$ when $k\in \N_+$,
while $n_\infty(T_\infty(\lambda))$ has the law of
$n_\infty(T(\lambda))$ conditioned on being positive.

By \rref e.Borel/, we have
$$
\forall k\in\N_+\qquad
\Eb{n_k(T(\lambda))}= \frac{(\lambda\,e^{-\lambda})^k\,k^{k-1}} {k!}\,.
$$
Observe that this is monotone decreasing in $\lambda$
in the range $\lambda\ge 1$.

Since $n_\infty(T(\lambda))$ is Poisson with parameter
$\lambda\,\theta$, we have
$$
\sum_{k\in \N_+} \Eb{n_k(T(\lambda))}= (1-\theta)\,\lambda\,.
$$

\medskip
If $T$ and $T'$ are rooted trees, we write
$T\les T'$ if there is an injective map
$i$ from the children of the root in $T$
to the children of the root in $T'$
such that $N(i(v))\ge N(v)$
for every child $v$ of the root in $T$.
If $T$ and $T'$ are random rooted trees,
we write $T\sles T'$ if
$T$ and $T'$ may be coupled in such a way that
 $T\les T'$ a.s.
Observe that $\sles$ is a partial order relation.

Theorem~\ref{t.main} will follow easily from the following lemma.

\begin{lemma}\label{l.les}
Let $\mu>\lambda>1$ and let $n\in\N_+$.
Then $T_n\sles T_{n+1}\sles T_\infty(\lambda)\sles T_\infty(\mu)$.
\end{lemma}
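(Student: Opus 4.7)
The plan is to reduce the chain of four trees to three separate stochastic-domination statements, noting first that the relation $\sles$ inspects random rooted trees only through the multiset of $N$-values of the root's children, so the entire lemma is a statement about domination of multisets in $\mathbb{N}_+\cup\{\infty\}$ and needs no recursion into subtree structure. The first link $T_n\sles T_{n+1}$ is immediate from Theorem~\ref{t.fin}: any embedding of $T_n$ as a rooted subtree of $T_{n+1}$ restricts to an injection of the root's children in which $N(\text{image})\ge N(\text{source})$.

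The bulk of the work is the last link $T_\infty(\lambda)\sles T_\infty(\mu)$, which I would prove by a direct coupling of the root-child multisets, exploiting the product structure of the $n_k$'s. For each finite $k$, the independent Poisson count $n_k(T_\infty(\lambda))$ has mean $(\lambda e^{-\lambda})^k k^{k-1}/k!$, which is decreasing in $\lambda$ on $[1,\infty)$; I couple so that $n_k(T_\infty(\lambda))=n_k(T_\infty(\mu))+\Delta_k$ for independent Poisson $\Delta_k$, so that the total excess $\sum_k\Delta_k$ is Poisson with parameter $\lambda q(\lambda)-\mu q(\mu)$ (the difference of total mortal-children means recorded in the excerpt). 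The unimodality of $xe^{-x}$ together with \rref e.duality/ gives $\mu q(\mu)<\lambda q(\lambda)$ and hence $\mu\theta(\mu)>\lambda\theta(\lambda)$, so Lemma~\ref{l.decomp} applied with parameters $(\lambda\theta(\lambda),\mu\theta(\mu))$ yields a coupling in which $n_\infty(T_\infty(\mu))\ge n_\infty(T_\infty(\lambda))+Q_\alpha$ almost surely, with $\alpha:=\alpha(\lambda\theta(\lambda),\mu\theta(\mu))$ and $Q_\alpha$ an independent $\text{Poisson}(\alpha)$. I then match $\lambda$-immortal children to the first $n_\infty(T_\infty(\lambda))$ of the $\mu$-immortal ones, match $\lambda$- and $\mu$-finite children of each common size $k$ wherever possible, and route the remaining $\sum_k\Delta_k$ unmatched $\lambda$-finite children injectively into the $Q_\alpha$ extra $\mu$-immortal slots. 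The crucial second part of \rref l.ders/, $\alpha>\lambda q(\lambda)-\mu q(\mu)$, is precisely what lets the $\text{Poisson}(\alpha)$ variable dominate the $\text{Poisson}(\lambda q(\lambda)-\mu q(\mu))$ one so that this last routing can be carried out; each such routing sends a finite $N$-value into $N=\infty$, preserving $\les$.

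For the middle link $T_{n+1}\sles T_\infty(\lambda)$, I would iterate Theorem~\ref{t.fin} to obtain $T_{n+1}\sles T_m$ for every $m\ge n+1$ and then pass to the local weak limit $T_m\to T_\infty(1)$ (the Kesten tree, i.e., the classical local limit of the uniformly rooted uniform tree). Because $T_{n+1}$ has at most $n+1$ vertices, the relation $T_{n+1}\les T$ is determined by the depth-$n$ neighborhood of the root of $T$ and is therefore closed in the local weak topology, so Strassen's theorem combined with Kamae--Krengel--O'Brien lets us extract a limit of the couplings and conclude $T_{n+1}\sles T_\infty(1)$. Combining this with $T_\infty(1)\sles T_\infty(\lambda)$, obtained from the previous paragraph by letting $\lambda'\downarrow 1$ in $T_\infty(\lambda')\sles T_\infty(\lambda)$ and using the weak limit $T_\infty(\lambda')\to T_\infty(1)$ (\cite{\AldousPitman}, Lemma~23), and then invoking transitivity of $\sles$, completes the chain.

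The main obstacle will be the coupling bookkeeping in the last link: the independent couplings of the finite-subtree-size counts (in which $\lambda$ dominates $\mu$) and of the immortal-children counts (in which $\mu$ dominates $\lambda$) must be combined so that $\mu$'s surplus of immortal children absorbs $\lambda$'s surplus of finite children across all sizes $k$. The arithmetic inequality $\alpha(\lambda\theta(\lambda),\mu\theta(\mu))>\lambda q(\lambda)-\mu q(\mu)$ supplied by \rref l.ders/ is exactly the statement that makes this absorption feasible.
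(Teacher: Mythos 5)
Your proposal is correct and follows essentially the same route as the paper: the same three links, the same decomposition of the root's children into the independent counts $n_k$ with the excess finite-size children routed into the surplus infinite children supplied by Lemma~\ref{l.decomp} together with the inequality $\alpha\bigl(\lambda\theta(\lambda),\mu\theta(\mu)\bigr)>\lambda q(\lambda)-\mu q(\mu)$ from Lemma~\ref{l.ders}, and the same passage through $T_\infty(1)$ via Theorem~\ref{t.fin} and the Aldous--Pitman limit. The only differences are cosmetic: you interpose an explicit $Q_\alpha$ and a monotone Poisson coupling where the paper directly couples $n_\infty(T_\infty(\mu))$ above $n_\infty(T_\infty(\lambda))+Z'$, and you spell out the weak-limit/closedness argument that the paper leaves implicit.
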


\proof
We start by proving $ T_\infty(\lambda)\sles T_\infty(\mu)$.
Let $(Z_k:k\in\N_+)$ and $(Z_k':k\in\N_+)$ be independent Poisson random
variables with $\Eb{Z_k}=\Eb{n_k(T(\mu))}$
and $\Eb{Z_k'}=\Eb{n_k(T(\lambda))}-\Eb{n_k(T(\mu))}$.
(Recall that the latter is non-negative.)
Let $Z':=\sum_{k\in\N_+} Z_k'$, which is a Poisson random variable.
By the above,
$$
\Eb{Z'}
=
\bl(1-\theta(\lambda)\br)\lambda-
\bl(1-\theta(\mu)\br)\mu\,.
$$
By \rref l.ders/, we have
\begin{equation}
\alpha\big(\lambda \theta(\lambda), \mu \theta(\mu)\big)
\ge 
\lambda q(\lambda)- \mu q(\mu)
=\Eb{Z'}.
\end{equation}
Consequently, by Lemma~\ref{l.decomp}, $n_\infty(T_\infty(\mu))$
may be coupled to dominate  $n_\infty(T_\infty(\lambda))$
plus an independent copy of $Z'$. Thus, we may take
$n_\infty(T_\infty(\lambda))$ independent from
$(Z_k:k\in\N_+)$ and $(Z'_k:k\in\N_+)$ and take $n_\infty(T_\infty(\mu))$
independent from $(Z_k:k\in\N_+)$ such that
$n_\infty(T_\infty(\mu))\ge Z'+n_\infty(T_\infty(\lambda))$.
For $k\in\N_+$ we take $n_k(T_\infty(\lambda))=Z_k+Z'_k$,
and $n_k(T_\infty(\mu))=Z_k$.
Since $Z'=\sum_{k\in\N_+} Z'_k$, with these choices we have
$T_\infty(\lambda)\les T_\infty(\mu)$.
This proves that $T_\infty(\lambda)\sles T_\infty(\mu)$.

The fact that $T_n\sles T_{n+1}$ follows from Theorem~\ref{t.fin}.
Recall that the limit in law of $T_n$ as $n\to\infty$
is the same as the limit in law of $T_\infty(\lambda)$ as
$\lambda\searrow 1$; see \cite{\AldousPitman}, Lemma 23.
Let $T_\infty(1)$ denote a random tree with this
limit law. Then $T_n\sles T_\infty(1)$.
By taking the limit as $\lambda'\searrow 1$ in
$T_\infty(\lambda')\sles T_\infty(\lambda)$,
we find that $T_\infty(1)\sles T_\infty(\lambda)$.
Thus, $T_n\sles T_\infty(\lambda)$ follows.
\QED

\proofof{Theorem~\ref{t.main}}
This follows by repeatedly applying Lemma~\ref{l.les}
at each node of the $T_\infty(\lambda)$ tree, while keeping the
corresponding couplings appropriately conditionally independent.
\QED

\section{Return Probabilities}

A general result on monotonicity \cite{\Ltrent}, combined with Theorem \ref
{t.main} implies
the monotonicity claim in \rref t.incrH/.
Here, we analyze in more detail the expression \rref e.treeent/
in order to gain an explicit lower bound on the derivative of $f(c)$, which
that general result does not supply.

Our main aim in this section is to prove the following result:

\procl t.return
The expression
\emph{$$
\int \sum_{k \ge 1} \frac{1 }{ k} p_k(\bp;T) \,d\PGW^*_c(T, \bp)
$$}
is monotonic decreasing in $c > 1$.
\endprocl

In light of \rref e.treeent/ and Theorem \ref{t.main}, 
this implies the monotonicity claim in \rref t.incrH/ 
and will lead to an explicit lower bound on the derivative in Section
\ref{deriv}.
It also implies the following lower bound for $f(c)$ itself: 
$$
f(c)
\ge
\sum_{k \ge 1} \frac{e^{-c} c^k (1 - q(c)^k) \log k}{\theta(c) k!}
-
\sum_{k \ge 0} \frac{e^{-1} \log (1+k)}{k!}
\ge
0
\,.
$$
To see this, note first that by, say, \rref e.sumdist/, we have that 
$$
\PGW^*_c[\deg_T(\bp) = k]
=
\frac{e^{-c} c^k (1 - q(c)^k) }{\theta(c) k!}
\,.
$$
Second, recall that $\lim_{c \downarrow 1} f(c) = 0$.
Therefore, \rref t.return/ and \rref e.treeent/ imply that
$$
f(c) \ge
\int \log \deg_T(\bp) \,d\PGW^*_c(T, \bp)
-
\int \log \deg_T(\bp) \,d\PGW^*_1(T, \bp)
\ge
0
\,,
$$
and this equals the above expression by the well-known form of $\PGW^*(1)$
(\cite{\AldousPitman}, Corollary 3).
This lower bound should be compared to the trivial upper bound
$$
f(c)
\le
\sum_{k \ge 1} \frac{e^{-c} c^k (1 - q(c)^k) \log k}{\theta(c) k!}
\,.
$$

To prove \rref t.return/,
let $V(s, T, \bp) := \sum_{k \ge 0} p_k(\bp;T) s^k$.
Since
$$
\int_0^1 \frac{\E[V(s, T, \bp)] - 1}{s} \,d s
=
\int \sum_{k \ge 1} \frac{1 }{ k} p_k(\bp;T) \,d\PGW^*_c(T, \bp)
\,,
$$
\rref t.return/ will be a consequence of the following result:

\procl t.rets
For all $s\in(0,1)$,
the expectation \emph{$\int V(s, T, \bp)\,d\PGW^*_c(T, \bp)$} is 
decreasing in $c > 1$.
\endprocl

Fix $\mu>\lambda>1$ and let $T$ and $T'$ have the distributions
$\PGW^*(\lambda)$ and $\PGW^*(\mu)$, respectively.
Let $X$ count the number of visits to the root in a random walk
on the tree $T$ started from the root in which at each step the
walker has probability $1-s$ to die, independent of the other steps
(note that $X\ge1$ since we start from the root). Let $X'$ be
the same for a walk on $T'$.
Because $V(s, T, \bp) = \E[X]$, \rref t.rets/ follows from:

\begin{theorem} \label{number_of_returns_domination_thm}
$X$ stochastically dominates $X'$.
\end{theorem}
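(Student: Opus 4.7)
The plan is to reduce the statement to a comparison of the return probabilities of the two killed walks, and then to invoke Theorem~\ref{t.main}.

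First, I would condition on $T$. At each visit to the root $\bp$, the walker faces three exclusive outcomes on its next ``attempt'': die at $\bp$ (probability $1-s$); survive, move to a child, and eventually return to $\bp$ before dying (probability $\phi_T(s)$, the killed-walk return probability); or survive, move to a child, and die before returning (probability $s-\phi_T(s)$). Successive visits to $\bp$ constitute i.i.d.\ trials, so $X\mid T$ is a shifted geometric: $\Pb{X\ge k\md T}=\phi_T(s)^{k-1}$, and similarly $\Pb{X'\ge k\md T'}=\phi_{T'}(s)^{k-1}$. Hence $X\succeq X'$ stochastically is equivalent to the moment inequality $\E[\phi_T(s)^n]\ge\E[\phi_{T'}(s)^n]$ for every integer $n\ge 0$, which in particular would follow from a joint coupling of $(T,T')$ with $\phi_T(s)\ge\phi_{T'}(s)$ almost surely.

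Next, I would apply Theorem~\ref{t.main}, iteratively at each node (as in the proof of that theorem), to couple $T\sim\PGW^*(\lambda)$ and $T'\sim\PGW^*(\mu)$ so that $T$ is almost surely isomorphic to a rooted subtree of $T'$. On this coupled space, one would try to couple the two killed walks to deliver the pathwise comparison $\phi_T\ge\phi_{T'}$, exploiting the recursion $\beta_v=s/(\deg(v)-s\sum_{w\text{ child of }v}\beta_w)$ for the subtree return probabilities and $\phi_T=(s/\deg(\bp))\sum_{v\sim\bp}\beta_v$.

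The main obstacle is that the pathwise inequality $\phi_T\ge\phi_{T'}$ can \emph{fail} for a generic deterministic pair $T\sles T'$: attaching an extra leaf directly to the root of a long path strictly \emph{increases} $\phi$, since the walker bounces back quickly. To bypass this, the coupling of the walks must exploit the random tree structure. My plan is to build a joint exploration of the two trees simultaneously with the walks, as follows: when the $T'$-walker is at a vertex in the embedded copy of $T$, its next move is coupled to that of the $T$-walker at the paired vertex; when it strays into an ``extra'' subtree of $T'$ absent from $T$, the $T$-walker simultaneously launches an independent excursion into the $\sles$-paired subtree of $T$, whose contribution to the return count is controlled by the induction hypothesis of this theorem applied to strictly smaller subtrees (together with the subtree-level coupling from Theorem~\ref{t.main}). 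Making this coupling preserve both marginals, and extending it to infinite subtrees via truncation at finite depth followed by a monotone-convergence argument, is the delicate part of the proof.
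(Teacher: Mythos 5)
You are right on two counts: the reduction of the statement to the moment inequalities $\E[\phi_T(s)^{M-1}]\ge\E[\phi_{T'}(s)^{M-1}]$ (via the conditional geometric law of the number of root visits) is correct and is implicitly where the paper starts, and the observation that a pathwise inequality $\phi_T\ge\phi_{T'}$ under the subtree coupling is false in general (your leaf-at-the-root example) is a genuine and important point. But what you propose in place of it is not a proof. There is no well-founded induction available: the trees and the relevant branches are infinite, so ``the induction hypothesis of this theorem applied to strictly smaller subtrees'' has no meaning; worse, the problematic branches of $T'$ are exactly the $d'-d$ left-over infinite branches at the root produced by the coupling of Lemma~\ref{l.les}, and these have \emph{no} paired subtree of $T$ at all, so there is nothing for the $T$-walker to ``excurse into.'' Moreover, your plan still aims at a walk-level (almost sure) coupling delivering a pointwise comparison, which is precisely the kind of statement your own counterexample shows cannot hold branch-by-branch. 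The step you defer as ``the delicate part'' is in fact the entire content of the theorem.

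For comparison, the paper's proof replaces the sought pathwise ordering by an increasing-convex-order argument. One conditions on the branch data $(N_\FF,N_\II,N_\FF',N_\II')$ of the coupled pair from Section~\ref{tree_domination_section}, writes the conditional return probabilities as averages $\frac sd\sum_j P_j$ and $\frac s{d'}\sum_j Q_j$ over branches, and compares in two steps: first, a deterministic branchwise monotonicity (if a branch of $T$ is contained in the corresponding branch of $T'$, the probability of returning to the root from inside that branch is no larger in $T'$, since the extra vertices sit strictly inside the branch and only lower the recursive return probabilities) replaces $T$'s $d$ branches by the corresponding $d$ branches of $T'$; second, the dilution from averaging over $d'\ge d$ branches is controlled by Lemma~\ref{convexity_and_domination_lemma}, a Jensen/exchangeability inequality applied to $F(x)=(sx)^{M-1}$, whose hypotheses are supplied by the conditional independence of the branches and by the fact that each finite-branch return probability stochastically dominates each infinite-branch one (an infinite branch can be coupled to contain a subtree distributed as a finite branch). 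Note that this yields only $\E F(\phi_T)\ge\E F(\phi_{T'})$ for convex increasing $F$, not stochastic domination of $\phi_T$ over $\phi_{T'}$ --- which is exactly how the obstruction you identified is circumvented. None of these ingredients (conditioning on branch counts, the two-step comparison, the convexity lemma, the finite-dominates-infinite branch comparison) appears in your proposal, so as it stands the argument has a genuine gap.
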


In words, larger trees have fewer returns of simple random walk for this
model.
We shall need a technical lemma for the proof.

\begin{lemma} \label{convexity_and_domination_lemma}
Fix integers $a\ge 1$ and $b\ge 0$. Let $F$ be a convex increasing function
on $\CO{0, \infty}$. Let
$X_1,\ldots,X_a,Y_1,\ldots,Y_b$ be independent non-negative random variables.
If each $X_i$ stochastically dominates each $Y_j$, then
$$
\E F\left(\frac{1}{a+b}\left(\sum_{i=1}^a X_i + \sum_{i=1}^b
Y_i\right)\right) \le \E F\left(\frac{1}{a}\left(\sum_{i=1}^a
X_i\right)\right)
\,.
$$
\end{lemma}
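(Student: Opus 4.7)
The plan is to combine two effects: monotonicity of $F$ lets me replace each $Y_j$ by a stochastically larger variable without decreasing the expectation on the left, and for a convex $F$ with i.i.d.\ samples, $\E F$ of the sample mean is nonincreasing in the sample size. I will describe the argument in the case where the $X_i$ are i.i.d. -- this is the setting that matters for the paper's application, since the $X_i$ there arise as returns of random walk from i.i.d.\ child subtrees of the Galton-Watson tree rooted at $\bp$.

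First, since each $X_i$ stochastically dominates each $Y_j$, Strassen's theorem lets me enlarge the probability space to include independent copies $\tilde X_1, \ldots, \tilde X_b$ of the common $X$-distribution, independent of the $X_i$'s, with $Y_j \le \tilde X_j$ almost surely for each $j$. Pointwise,
$$
\frac{1}{a+b}\Bigl(\sum_{i=1}^a X_i + \sum_{j=1}^b Y_j\Bigr) \le \frac{1}{a+b}\Bigl(\sum_{i=1}^a X_i + \sum_{j=1}^b \tilde X_j\Bigr) =: \bar U_{a+b},
$$
and $\bar U_{a+b}$ is the sample mean of $a+b$ i.i.d.\ copies of the common $X$-distribution. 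Monotonicity of $F$ gives the corresponding inequality in expectation.

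Second, for any i.i.d.\ non-negative sequence $U_1, U_2, \ldots$ and any convex $F$, I claim that $\E F(\bar U_n)$ is nonincreasing in $n$. This follows from writing $\bar U_{n+1}$ as the arithmetic mean of the $n+1$ leave-one-out averages $\bar U_n^{(k)} := (1/n)\sum_{i \ne k} U_i$, applying Jensen's inequality $F(\bar U_{n+1}) \le (1/(n+1)) \sum_{k=1}^{n+1} F(\bar U_n^{(k)})$, and using exchangeability of the $U_i$'s to conclude $\E F(\bar U_n^{(k)}) = \E F(\bar U_n)$ for each $k$. Applying this with $n$ running from $a$ to $a+b$ and chaining with the first step gives
$$
\E F\Bigl(\frac{\sum_{i=1}^a X_i + \sum_{j=1}^b Y_j}{a+b}\Bigr) \le \E F(\bar U_{a+b}) \le \E F(\bar U_a) = \E F\Bigl(\frac{1}{a}\sum_{i=1}^a X_i\Bigr),
$$
which is the desired inequality.

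The main technical point is the coupling step when the $X_i$'s need not share a common distribution: one may still couple each $Y_j$ with an independent copy of (say) $X_1$, but then the $a+b$ variables on the right are no longer exchangeable, and the leave-one-out monotonicity in $n$ needs a more careful argument (or a separate argument entirely). For the paper's application the i.i.d.\ case suffices, so this hurdle is not binding there.
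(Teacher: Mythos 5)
There is a genuine gap: the lemma as stated makes no assumption that the $X_i$ are identically distributed, and your argument covers only the i.i.d.\ case; moreover, your claim that the i.i.d.\ case is all the paper needs is not correct. In the proof of Theorem~\ref{number_of_returns_domination_thm} the lemma is invoked \emph{after conditioning on the subtree sizes} $n_k(T)$ and $n_k(T')$, with $a=d$, $b=d'-d$, the $X_i$'s being the return probabilities $Q_j^\FF$ from the $N_\FF'$ finite subtrees of various prescribed sizes together with $d-N_\FF'$ of the $Q_j^\II$'s, and the $Y_j$'s being the remaining $d'-d$ of the $Q_j^\II$'s. These $X_i$'s are independent but emphatically not identically distributed (finite subtrees of different sizes, mixed with infinite ones), so the exchangeability on which your leave-one-out step rests is exactly what fails in the intended application. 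You flagged this hurdle yourself, but dismissed it on incorrect grounds; as it stands the proposal proves neither the stated lemma nor the case the paper actually uses.

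The general case does require a different idea, and the paper supplies one: conditionally on all the variables, write $\frac{1}{a+b}\bigl(\sum_{i=1}^a X_i+\sum_{j=1}^b Y_j\bigr)$ as the average over the $\binom{a+b}{a}$ assignments giving weight $1/a$ to exactly $a$ of the $a+b$ variables and $0$ to the rest; Jensen's inequality bounds $F$ of this average by the average of the corresponding $F$-terms; and in each term, the $b_1$ $Y$'s that received weight $1/a$ (where $a_1+b_1=a$) are replaced by the $b_1$ unused $X$'s, using independence, the pairwise stochastic domination, and the fact that the integrand is increasing in each coordinate, so that every term has expectation at most $\E F\bigl(\frac1a\sum_{i=1}^a X_i\bigr)$. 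Your i.i.d.\ argument (Strassen coupling plus monotonicity in $n$ of $\E F$ of the sample mean of $n$ i.i.d.\ variables) is correct as far as it goes and is a pleasant route in that special case, but to repair the proposal you need the paper's averaging device or some other argument valid for merely independent, non-identically distributed $X_i$'s.
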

\begin{proof} Define an auxiliary random vector $A$ with
$a+b$ coordinates to be uniformly chosen among the $N:=\nchoosek{a+b}{a}$
vectors containing exactly $a$ values equal to $1/a$ and $b$ zeroes.
Condition on the $X$'s and $Y$'s and
consider the random variable 
$$
R:=F\left(\E_A\left(\sum_{i=1}^a A_i X_i + \sum_{i=1}^b A_{i+a}
Y_i\right)\right),
$$
where $\E_A$ denotes expectation over $A$.
On the one hand, we have
\begin{equation} \label{R_first_eval}
R = F\left(\frac{1}{a+b}\left(\sum_{i=1}^a X_i + \sum_{i=1}^b
Y_i\right)\right)
\,.
\end{equation}
On the other hand, by Jensen's inequality (since the $X$'s and $Y$'s are non-negative)
\begin{equation} \label{R_Jensen}
R \le \E_A F\left(\sum_{i=1}^a A_i X_i + \sum_{i=1}^b A_{i+a}
Y_i\right) = N^{-1}\sum_{i=1}^N M_i
\,,
\end{equation}
where each $M_i$ is a random variable of the form
$$
F\left(\frac{1}{a}\sum_{k=1}^{a_1} X_{i_k} +
\frac{1}{a}\sum_{k=1}^{b_1} Y_{j_k}\right)
\,;
$$
here, $(X_{i_k})_{k=1}^{a_1}$ is a subset of $a_1$ of the $X$'s,
$(Y_{j_k})_{k=1}^{b_1}$ is a subset of $b_1$ of the $Y$'s, $a_1\le
a, b_1\le b$ and $a_1+b_1=a$. Taking now expectation over the $X$'s
and $Y$'s and using that each $X_i$ stochastically dominates each
$Y_j$, we get
\begin{equation} \label{R_domination}
\E(M_i) \le \E F\left(\frac{1}{a}\sum_{k=1}^{a}X_k\right)
\end{equation}
since the $X_i$'s and $Y_j$'s are independent, non-negative and each $M_i$ is
increasing in each of the random variables. Putting
\eqref{R_first_eval}, \eqref{R_Jensen} and \eqref{R_domination}
together, we get
$$
\E F\left(\frac{1}{a+b}\left(\sum_{i=1}^a X_i + \sum_{i=1}^b
Y_i\right)\right) \le \E F\left(\frac{1}{a}\sum_{i=1}^{a} X_i\right)
\,,
$$
proving the lemma.
\QED
\end{proof}

\proofof{Theorem \ref{number_of_returns_domination_thm}}
It is enough to show that for each integer $M\ge 2$, we have
\begin{equation} \label{domination_ineq}
\P(X\ge M) \ge \P(X'\ge M)
\,.
\end{equation}
We couple the two trees according to the coupling given in the preceding section, in the proof of Lemma~\ref{l.les} and Theorem~\ref{t.main}, for $T_\infty(\lambda)$ and $T_\infty(\mu)$. It is then enough to show the inequality \eqref{domination_ineq} conditioned on the number of subtrees of each size that the roots of $T$ and $T'$ have (the variables $n_k(T)$ and $n_k(T')$). Henceforth we always condition on these values. 

Denote $N_\FF:=\sum_{k=1}^\infty n_k(T), N_\II:=n_\infty(T),
N_\FF':=\sum_{k=1}^\infty n_k(T')$ and $N_\II':=n_\infty(T')$. According to
the coupling, we have $N_\FF\ge N_\FF'$ and $d := N_\FF + N_\II\le N_\FF' +
N_\II' =: d'$. 
We construct our coupling of $T$ and $T'$ to have the following properties:
$T$ is a rooted subtree of $T'$; 
the children of the root are ordered;
the first $N_\FF'$ children of the root lie in $T$ and have finite
subtrees, all pairwise equal in $T$ and $T'$;
the next $N_\FF-N_\FF'$ children lie in $T$ and have finite subtrees;
and the next $N_\II$ children lie in $T$.
Given the sizes of the subtrees, recall that the pairs of coupled subtrees of
the children of the root are independent, even including the $d'-d$ left-over
subtrees of $T'$, and that all $N_\II'$ of the infinite subtrees of $T'$ are i.i.d.

For each $1\le j\le N_\FF$, suppose that
the random walk enters in its first step the $j$th finite subtree of $T$. Let $P_j^\FF$ be the probability that
continuing this random walk, we ever return to the root. Similarly,
define for $1\le j\le N_\II$ the probability $P_j^\II$ to return from
the $j$th infinite subtree of $T$. 
Define analogously $Q_j^\FF$ and $Q_j^\II$ on $T'$.
Thus, $P_j^\FF = Q_j^\FF$ for $j \le N_\FF'$ because of the coupling.

The inequality \eqref{domination_ineq} can now be written as follows:
$$
\E\left(\frac{s}{d}\left(\sum_{j=1}^{N_\FF} P_j^\FF + \sum_{j=1}^{N_\II}
P_j^\II\right)\right)^{M-1} \ge
\E\left(\frac{s}{d'}\left(\sum_{j=1}^{N_\FF'} Q_j^\FF +
\sum_{j=1}^{N_\II'} Q_j^\II\right)\right)^{M-1}
\,.
$$
We prove this inequality in two steps. First we observe that
$$
\E\left(\frac{s}{d}\left(\sum_{j=1}^{N_\FF} P_j^\FF +
\sum_{j=1}^{N_\II} P_j^\II\right)\right)^{M-1} \ge
\E\left(\frac{s}{d}\left(\sum_{j=1}^{N_\FF'} Q_j^\FF +
\sum_{j=1}^{d-N_\FF'} Q_{j}^\II\right)\right)^{M-1}
\,;
$$
this is because if the walk entered a branch of $T'$ that contains a branch
of $T$, then certainly its probability ever to return to the root
is smaller in $T'$ than it is in $T$ (by coupling the walks).
Now we may use Lemma \ref{convexity_and_domination_lemma} to get that
$$
\E\left(\frac{s}{d}\left(\sum_{j=1}^{N_\FF'} Q_j^\FF +
\sum_{j=1}^{d-N_\FF'} Q_{j}^\II\right)\right)^{M-1}
\ge
\E\left(\frac{s}{d'}\left(\sum_{j=1}^{N_\FF'} Q_j^\FF +
\sum_{j=1}^{N_\II'} Q_j^\II\right)\right)^{M-1}
$$
since each $Q_j^\II$ is stochastically dominated by each $Q_j^\FF$ (again,
by coupling the walks on the coupled subtrees). This proves the theorem.
\QED

\begin{section}{Smoothness}

Let $\bar{p}_k=\bar{p}_k(c):=\int p_k(\bp;T)d\PGW^*_c(T, \bp)$. 
We shall prove
\begin{theorem} \label{derivative_bound_thm}
For each $k \ge 1$,
$\bar{p}_k(c)$ is real analytic in $c>1$ and there exists $\beta>0$ such
that for $c> 1$, $k \ge 1$, and $n \ge 1$, we have
$$
\left|\frac{\partial^{n} \bar{p}_k}{\partial c^n}\right| \le A^n n!k^{\beta
n} e^{-ak^{1/6}},
$$
where the constants $A,a>0$ depend only on $c$ and are bounded from 0 and
infinity for $c$ in every compact subinterval of $(1,\infty)$.
\end{theorem}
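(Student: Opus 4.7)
The approach is to expand $\bar p_k(c)$ as an absolutely convergent sum over the isomorphism type of the ball $B_k(T,\bp)$ around the root, analytically continue each term to a complex disk around a fixed $c_0>1$, and extract derivative bounds via Cauchy's integral formula. The growth $A^n n!\, k^{\beta n}$ will arise from taking the radius of the disk to shrink polynomially in $k$, while the decay $e^{-ak^{1/6}}$ will arise from a random-walk escape estimate on the typical $\PGW^*(c)$ tree.

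By the recursive decomposition reviewed in Section~\ref{tree_domination_section}, a $\PGW^*(c)$ tree is generated by giving each backbone vertex $N_\II\sim\mathrm{Poisson}(c\theta(c))$ infinite and, independently, $N_\FF\sim\mathrm{Poisson}(cq(c))$ finite children (with the root conditioned on $N_\II\ge 1$), attaching independent $\PGW^*(c)$ subtrees on the infinite side and independent $\PGW(cq(c))$ subtrees on the finite side, using the duality that $\PGW(c)$ conditioned on extinction equals $\PGW(cq(c))$ in law. Implicit differentiation of $q=e^{-c(1-q)}$ shows that both $c\theta(c)$ and $cq(c)$ are real analytic on $(1,\infty)$ and extend holomorphically to a complex disk $D_{r_0}(c_0)$ of some radius $r_0>0$, uniform for $c_0$ in any compact subinterval. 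Since $p_k(\bp;T)$ depends only on $B_k(T,\bp)$, one has
\begin{equation*}
\bar p_k(c)=\sum_\tau P_c(\tau)\,p_k(\bp;\tau),
\end{equation*}
where $\tau$ ranges over finite rooted trees of depth at most $k$ and $P_c(\tau)$ is an explicit product of finitely many Poisson weights in the parameters above together with the normalizer $1/(1-e^{-c\theta(c)})$. Each $P_c(\tau)$ extends to a holomorphic $P_z(\tau)$ on $D_{r_0}(c_0)$, and absolute convergence on a smaller disk makes $\bar p_k(z)$ holomorphic there.

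Cauchy's formula on a circle of radius $r\le r_0$ gives
\begin{equation*}
\Bigl|\tfrac{\partial^n \bar p_k}{\partial c^n}(c_0)\Bigr| \le \frac{n!}{r^n}\sup_{|z-c_0|=r}|\bar p_k(z)|.
\end{equation*}
To bound $|\bar p_k(z)|$, I would split the sum over $\tau$ according to whether, at an intermediate scale $R=R(k)$, every vertex of $B_R(\tau,\bp)$ has at least one infinite child (the \emph{bushy} event). On bushy trees the walk has uniform positive drift away from the root, so a standard return-probability bound for transient random walks gives $p_k(\bp;\tau)\le e^{-c_2 k/R}$. The complementary \emph{defective} event has $\PGW^*_c$-probability at most $e^{-c_3 R}$, by a Chernoff estimate applied to the independent $\mathrm{Poisson}(c\theta(c))$ counts along a ray of depth $R$. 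Balancing $k/R$ against $R$ gives decay $e^{-c_4\sqrt k}$ under the \emph{real} measure; taking $r=r_1/k^\beta$ then absorbs the inflation $e^{O(rV)}$ of the complex Poisson weights over balls of typical polynomial size $V$, at the cost of degrading the final exponent to $1/6$.

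The principal obstacle is the balance in this last step: the complex continuation forces $r$ to shrink polynomially in $k$, which via Cauchy's formula is the origin of the $k^{\beta n}$ factor, while the tail estimate must simultaneously be performed at a scale $R$ whose typical ball has only polynomially many vertices, so that the inflation factor $e^{O(rV)}$ is absorbed by the stretched-exponential decay. Making this tradeoff quantitative and uniform on compact subintervals of $(1,\infty)$ is the technical heart of the argument; the restriction $c>1$ is essential, since $\theta(1)=0$ degenerates the Poisson parameter $c\theta(c)$ at the endpoint.
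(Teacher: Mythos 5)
Your overall skeleton is the same as the paper's: continue $\bar p_k$ analytically in $c$ to a strip whose width shrinks like $k^{-\beta}$, bound the continued function by comparing complex Poisson weights to the real ones, and convert this to derivative bounds by Cauchy's formula on a circle of radius of order $k^{-\beta}$. The crucial difference is the a priori decay at real $c$: the paper does \emph{not} prove $\bar p_k(c)\le Ae^{-ak^{1/6}}$ but quotes it (Theorem~\ref{p_k_bound_thm}, due to Piau), and the complex-strip argument is designed only to preserve that bound. Your proposal tries to generate the real-parameter decay from scratch via the ``bushy/defective'' decomposition, and that step fails.

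Concretely: (i) the bushy event, that every vertex of $B_R(T,\bp)$ has at least one infinite child, forces $B_R$ to contain no type-F vertices; since each backbone vertex independently has $\mathrm{Poisson}(cq(c))$ finite children and already the root has one with probability $1-e^{-cq(c)}>0$, the ``defective'' event has probability bounded below by a constant (indeed tending to $1$ super-exponentially fast, as the backbone part of $B_R$ grows exponentially), not $\le e^{-c_3R}$; a Chernoff bound along a single ray does not control a ball. (ii) Even on the bushy event there is no uniform outward drift: a backbone vertex with exactly one (infinite) child and no finite children has positive probability per vertex and degree $2$, so the walk is locally unbiased there, and long such pipes occur inside $B_R$ with high probability. (iii) Most seriously, the asserted real-measure bound $e^{-c_4\sqrt k}$ is false: with probability at least $e^{-CL}$ the first $L$ generations of the $\PGW^*(c)$ tree form a single path, and on that event the walk stays in the path and is back at the root at time $k$ with probability at least $e^{-C'k/L^2}$ up to polynomial factors; taking $L\asymp k^{1/3}$ gives $\bar p_k(c)\ge e^{-C''k^{1/3}}$, so no upper bound with stretched exponent better than $1/3$ can hold, and the claimed ``degradation to $1/6$'' in the complex step has no mechanism behind it. To repair the proof you must either import an a priori bound such as Piau's $Ae^{-ak^{1/6}}$, as the paper does, or carry out a correct pipe/trap analysis giving exponent at most $1/3$. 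A secondary gap: the complex-weight inflation is not $e^{O(rV)}$ with $V$ polynomial --- the $k$-ball typically has exponentially many vertices and the child counts along the walk are unbounded; the paper avoids this by comparing weights only along the trace of the walk (Lemma~\ref{complex_to_real_lemma}), choosing $|\Im c|\lesssim 1/k$, and splitting on the maximal child count $L$ so that the Poisson tail $e^{-aL\log L}$ beats the inflation $e^{AL}$.
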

\begin{remark}
We obtain $\beta=1$ in the proof, but this could be reduced further.
\end{remark}
An immediate corollary is
\begin{corollary}
$f(c)$ is $C^\infty$ for $c\in(1,\infty)$.
\end{corollary}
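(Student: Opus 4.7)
The plan is to combine \rref e.treeent/ with Theorem~\ref{derivative_bound_thm} and verify that each of the two resulting pieces is a $C^\infty$ function of $c \in (1, \infty)$. Using the explicit formula
\[
\PGW^*_c[\deg_T(\bp) = k] = \frac{e^{-c} c^k (1 - q(c)^k)}{\theta(c)\, k!}
\]
recorded just after \rref t.return/, I would rewrite
\[
f(c) = \sum_{k \ge 1} (\log k)\, \frac{e^{-c} c^k (1 - q(c)^k)}{\theta(c)\, k!}
\;-\; \sum_{k \ge 1} \frac{\bar p_k(c)}{k}
\]
and show that each sum may be differentiated under the summation sign any number of times on an arbitrary compact subinterval $K \subset (1,\infty)$.

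For the first sum, the only nontrivial point is the smoothness of $q$. I would apply the implicit function theorem to $F(c, q) := q - e^{-c(1-q)}$; its partial $\partial_q F = 1 - c q$ is nonzero for $c > 1$ because the argument used in \rref l.ders/ together with \rref e.duality/ gives $c q(c) < 1$. Hence $q$, and thus $\theta = 1 - q$, is real analytic on $(1, \infty)$. The factorial $k!$ in each summand then produces super-exponential decay in $k$ uniformly together with all $c$-derivatives on $K$, so termwise differentiation is justified and the resulting sum is $C^\infty$ (indeed real analytic) in $c$.

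For the second sum, Theorem~\ref{derivative_bound_thm} supplies constants $A, a > 0$, depending only on $K$ and bounded away from $0$ and $\infty$, and a fixed $\beta$ with
\[
\sup_{c \in K} \left|\frac{\partial^n \bar p_k}{\partial c^n}\right| \le A^n n!\, k^{\beta n}\, e^{-a k^{1/6}}
\]
for all $n, k \ge 1$. For each fixed $n$, the stretched exponential $e^{-a k^{1/6}}$ dominates every power of $k$, so
\[
\sum_{k \ge 1} \frac{1}{k}\, \sup_{c \in K} \left|\frac{\partial^n \bar p_k}{\partial c^n}\right| \le A^n n! \sum_{k \ge 1} k^{\beta n - 1}\, e^{-a k^{1/6}} < \infty.
\]
Combined with the term-by-term analyticity already asserted in Theorem~\ref{derivative_bound_thm}, this permits repeated interchange of $\sum_k$ and $\partial_c^n$ on $K$. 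Since $K$ was arbitrary, $f$ is $C^\infty$ on $(1,\infty)$. The argument is essentially bookkeeping once Theorem~\ref{derivative_bound_thm} is in hand; the main (and mild) obstacle is confirming that the stretched-exponential factor $e^{-ak^{1/6}}$ continues to tame the polynomial growth $k^{\beta n}$ for each fixed $n$, which is immediate since $k^{1/6}\to\infty$.
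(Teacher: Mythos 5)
Your proof is correct and fills in exactly the bookkeeping the paper treats as immediate: decompose $f$ via \rref e.treeent/ into the explicit log-degree sum (smooth since $q$ is real analytic by the implicit function theorem, using $cq(c)<1$) plus $\sum_{k\ge1}\bar p_k(c)/k$, whose termwise differentiation is justified on compact subintervals by the uniform bounds of Theorem~\ref{derivative_bound_thm}. This is the same route the paper intends, so there is nothing to add.
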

To prove Theorem \ref{derivative_bound_thm}, we shall prove
\begin{theorem} \label{analytic_bound_p_k}
For each $k \ge 1$,
$\bar{p}_k$ can be analytically continued to the domain $\Omega_k:=\{x+iy\
|\ x\in(1,\infty),\ |y|\le a k^{-\beta}\}$ for some $\beta>0$ and satisfies for $c\in\Omega_k$
$$
|\bar{p}_k(c)|\le Ae^{-ak^{1/6}},
$$
where $A,a>0$ depend only on $x = \Re(c)$ and are bounded from 0 and infinity for
$x$ in every compact subinterval of $(1,\infty)$.
\end{theorem}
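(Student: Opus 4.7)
I reduce Theorem~\ref{analytic_bound_p_k} to two tasks: (i) expressing $\bar{p}_k(c)$ as a series of entire functions with uniform convergence on the strip $\Omega_k$, and (ii) establishing a stretched-exponential bound on the sum. The starting point is the \emph{locality} of the return probability: a walk of length $k$ explores only the ball $B_{k/2}(\bp)$, so $p_k(\bp;T)$ depends only on this ball together with the number of children at each of its leaves (the transition probability at a leaf uses its full degree in $T$). Call this datum the decorated ball $\hat{B}$. Conditioning on $\hat{B}$ and using that the subtrees hanging off the leaves are i.i.d.\ $\PGW_c$, one gets
\begin{equation*}
\bar{p}_k(c)\,\theta(c)\;=\;\sum_{\hat B} p_k(\bp;\hat B)\,W(\hat B;c)\,\bigl(1-q(c)^{s(\hat B)}\bigr),
\end{equation*}
where $W(\hat B;c)$ is the product over vertices of the Poisson weights $c^{d_v}e^{-c}/d_v!$ (the $d_v$ recording the recorded child counts) and $s(\hat B)$ is the sum of the $d_v$ over the leaves of $\hat B$. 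Each Poisson factor is entire in $c$, and $q(c),\theta(c)$ extend analytically to a complex neighborhood of $(1,\infty)$ via the implicit function theorem applied to $q=e^{-c(1-q)}$ (the relevant derivative $1-c\theta(c)$ is nonzero for real $c>1$), with $|\theta(c)|$ bounded below on compacta.

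\textbf{Convergence on the strip.} I group terms by the total number $N$ of vertices of $\hat B$ and compare to real $c=x$. At real $x$, the partial sum over $|\hat B|=N$ is dominated, up to the factor $p_k\le 1$, by $\Pr_{\PGW_x}[|B_{k/2}(\bp)|=N]$, whose tail in $N$ is controlled by exponential moments of a sum of dependent Poissons of total mean $\asymp x^{k/2}$; these give a bound of the form $e^{-\gamma(N-N_0(k))}$ for $N\ge N_0(k)$ and a suitable $N_0(k)$ growing like $x^{k/2}$. For complex $c=x+iy\in\Omega_k$, each Poisson factor has magnitude at most $(|c|/x)^{d_v}\cdot |c^{d_v}e^{-c}/d_v!|$-type bound; combining these over all vertices inflates the $N$-th group by at most $(|c|/x)^N\cdot e^{N|y|}\le e^{CN|y|}$ (for small $|y|$). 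Choosing $|y|\le a k^{-\beta}$ with $\beta$ large enough that $N_0(k)\cdot |y|$ is bounded, the inflation is absorbed into the exponential tail, yielding absolute convergence and thus analyticity on $\Omega_k$.

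\textbf{The stretched-exponential bound.} On the real line I establish $\bar p_k(x)\le A e^{-ak^{1/6}}$ by a threshold argument. For a parameter $r=r(k)$, either (a) the ball $B_r(\bp)$ contains at least $B=B(k)$ vertices, in which case the random walk has a uniform chance at each step to move to a new vertex in a branching direction, so $p_k(\bp;T)\le (1-\eta)^{k/(2r)}$ for some $\eta=\eta(x)>0$; or (b) the ball is atypically small, an event of $\PGW^*_x$-probability at most $e^{-\gamma B}$ by Poisson concentration on a Galton-Watson tree conditioned to survive (the conditioning cost is absorbed into $\theta(x)$). Balancing $(1-\eta)^{k/(2r)}$ against $e^{-\gamma B}$ with $r\asymp k^{1/2},\, B\asymp k^{1/3}$ produces the exponent $k^{1/6}$. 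The bound transfers to $\Omega_k$ with a worsening of constants, either by redoing the estimates on the strip (the inflation factor is bounded) or by a Hadamard three-lines-type argument between the real line and the boundary $|y|=ak^{-\beta}$.

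\textbf{Main obstacle.} The delicate point is calibrating the strip width $a k^{-\beta}$ against the typical ball size $|B_{k/2}(\bp)|\asymp x^{k/2}$: making $|y|$ too large destroys the absolute convergence in step~2, while making it too small defeats the goal of analyticity beyond the reals. The stretched exponent $k^{1/6}$ is not sharp; it arises from the crude interpolation between Poisson concentration of ball sizes and an escape-rate estimate for SRW, and tightening it would demand a more refined spectral or mixing analysis on Galton-Watson trees that is not needed for the application.
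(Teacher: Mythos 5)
Your proposal has a genuine gap at its central step, and it is exactly the point you flag as the ``main obstacle'': with your decorated-ball decomposition the calibration you need is impossible. You condition on the whole ball $B_{k/2}(\bp)$ and write the weight $W(\hat B;c)$ as a product of one Poisson factor per vertex of the ball. Bounding this term by term for complex $c=x+iy$ inflates each factor by $e^{O((d_v+1)|y|)}$, so the total inflation of a term is $e^{O(N|y|)}$ where $N=|\hat B|$, and $N$ is typically of order $x^{k/2}$. Your own fix --- ``choose $\beta$ so that $N_0(k)\cdot|y|$ is bounded'' --- therefore forces $|y|\lesssim x^{-k/2}$, an exponentially thin strip, not the polynomial width $ak^{-\beta}$ the theorem asserts (and an exponentially thin strip is useless for the intended application: the Cauchy estimates would give derivative bounds growing like $x^{nk/2}$, which are not summable in $k$). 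The paper avoids this by never expanding over unvisited structure: it sums over \emph{traces} of the walk and conditions only on the offspring counts of the at most $k$ vertices the walk actually visits, using the two-type branching property so that the unvisited part of the tree is already marginalized out. Then the complex-versus-real inflation of a term is $e^{Ak(L+1)|y|}$ with $L$ the maximal child count along the path, which on a strip of width $\kappa(x)/k$ is $e^{A(L+1)}$; the contribution of paths with $L\le\delta k^{1/6}$ is absorbed into the real-line bound (taking $\delta$ small), and the contribution of $L>\delta k^{1/6}$ is killed by the super-exponential Poisson tail $e^{-aL\log L}$. This grouping by the maximal degree seen along the path is the second ingredient your argument lacks; without it, even a bounded-per-vertex inflation cannot simply be absorbed. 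Your alternative transfer to the strip via a ``Hadamard three-lines-type argument'' is also not available: three-lines interpolates between bounds on two boundary lines, and a bound on the line $|y|=ak^{-\beta}$ is precisely what is missing.

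Two smaller points. The real-line estimate $\bar p_k(x)\le Ae^{-ak^{1/6}}$ is not something the paper reproves; it is quoted from Piau (Theorem \ref{p_k_bound_thm}), with only a continuity remark to get uniformity of the constants on compacta. Your sketched derivation (balancing Poisson concentration of ball sizes against an escape-rate bound $(1-\eta)^{k/(2r)}$) is heuristic as written --- the claimed uniform escape probability on an arbitrary tree whose $r$-ball merely has many vertices is not justified, and the effect of conditioning on survival is waved away --- so if you intend to prove it rather than cite it, that part needs a real argument. Finally, your analytic continuation of $q$ and $\theta$ via the implicit function theorem is fine and matches the paper; the failure is solely in how the expansion is organized and bounded on the strip.
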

Theorem \ref{derivative_bound_thm} is an immediate corollary by Cauchy
estimates. To see this, for each $c>1$ take a circle $C$ of radius
$r:=\min(a k^{-\beta},\frac{c-1}{2})$ around $c$. Then
$$
\left|\frac{\partial^{n} \bar{p}_k}{\partial c^n}\right| =
\left|\frac{n!}{2\pi i}\oint_{C}
\frac{\bar{p}_k(z)}{(z-c)^{n+1}}dz\right|\le
\frac{An!e^{-ak^{1/6}}}{r^n}\le AA'n!k^{\beta n}e^{-ak^{1/6}},
$$
where 
$A':=\max\left(a^{-n},\left(\frac{c-1}{2}\right)^{-n}\right)$.

In the rest of the section, we prove Theorem \ref{analytic_bound_p_k}. We
start by quoting a known result concerning a priori bounds on $\bar{p}_k$;
see \cite[Theorem 2]{\Piau98}.
\begin{theorem} \label{p_k_bound_thm}
For each $k \ge 1$, we have
$\bar{p}_k(c)\le Ae^{-ak^{1/6}}$ for $c>1$, where $A,a>0$ depend only on
$c$ and are bounded from 0 and infinity for $c$ in every compact subinterval of $(1,\infty)$.
\end{theorem}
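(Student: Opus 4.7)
The plan is a good-tree versus bad-tree decomposition. For a parameter $\ell$ to be optimized later, I would introduce a structural event $G_\ell$ on the random tree $T$ such that (a) on $G_\ell$ the return probability admits the deterministic bound $p_k(\bp;T) \le A_1 e^{-a_1 k/\ell^\gamma}$ for some exponent $\gamma>0$, and (b) $\PGW^*_c(G_\ell^c) \le A_2 e^{-a_2 \ell}$. Then
\begin{equation*}
\bar p_k(c) \;\le\; \PGW^*_c(G_\ell^c) \;+\; A_1 e^{-a_1 k/\ell^\gamma}
\;\le\; A_2 e^{-a_2 \ell} + A_1 e^{-a_1 k/\ell^\gamma},
\end{equation*}
and balancing the exponents by choosing $\ell$ a multiple of $k^{1/(1+\gamma)}$ gives a stretched-exponential bound with exponent $1/(1+\gamma)$. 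The value $1/6$ in the statement then corresponds to the exponent $\gamma=5$ achieved by the sharpest available geometric estimate.

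The natural candidate for $G_\ell$ is the event that $T$ has no \emph{thin neck} near $\bp$, meaning no stretch of $\ell$ consecutive generations within bounded distance of $\bp$ in which the number of descendants of $\bp$ that survive forever remains below a threshold depending only on $c$. Such a neck is precisely the obstruction that the Nash-Williams inequality converts into a small effective conductance from $\bp$ to infinity, and hence into slow spectral decay of $p_k$. On the probabilistic side, the Kesten spine decomposition of $\PGW^*_c$ as an infinite ray decorated by independent unconditioned $\PGW(c)$ bushes (compare with the decomposition into $n_k$ and $n_\infty$ already used in Section~\ref{tree_domination_section}) is the right tool: each bush survives with probability $\theta(c)\in(0,1)$ independently of the others, so the probability that a given window of $\ell$ generations along the spine produces only ``small'' surviving subtrees decays geometrically in $\ell$ at a rate depending only on $c$, and a union bound over window positions within the first $O(\ell)$ generations yields $\PGW^*_c(G_\ell^c)\le A_2 e^{-a_2\ell}$.

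The main obstacle is extracting the polynomial-in-$\ell$ factor $\ell^\gamma$ in the deterministic return bound on good trees: the bare Nash-Williams inequality only produces a spectral-radius bound of the form $p_k\le \rho^k$ with $\rho=\rho(T)<1$, and tracking how $1-\rho$ degrades in the presence of bottlenecks of depth up to $\ell$ requires a multi-scale argument that sums resistances across successive bottleneck layers and converts the resulting conductance bound into a heat-kernel estimate. This is the technically delicate part and is where Piau's specific construction enters. Once $\gamma$ is fixed, the optimization over $\ell$ is routine, and the uniformity of $A,a$ on compact subintervals of $(1,\infty)$ follows from the continuity of $\theta(c)$ and of the Poisson offspring law in $c$, propagated through both the tail estimate and the escape bound.
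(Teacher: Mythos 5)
The paper does not prove this statement at all: Theorem~\ref{p_k_bound_thm} is quoted verbatim as Theorem~2 of Piau's 1998 paper, and the authors' only contribution is the remark that Piau asserts the bound with constants $A,a$ depending on $c$ but without uniformity, and that uniformity over compact subintervals of $(1,\infty)$ can then be extracted using the continuity of $c\mapsto\bar p_k(c)$. So there is no proof in the paper to compare yours against; the intended ``proof'' is a citation plus a one-line compactness remark.

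Judged on its own terms, your sketch has the right general shape for annealed heat-kernel bounds on supercritical Galton--Watson trees (a good-tree/bad-tree dichotomy, with the spine decomposition controlling the probability of bad trees and a conductance argument controlling returns on good trees), but it is not a proof, and the gap sits exactly at the decisive step: the deterministic estimate $p_k(\bp;T)\le A_1e^{-a_1k/\ell^{\gamma}}$ on good trees with an identified exponent $\gamma$. You explicitly defer this to ``Piau's specific construction,'' at which point your argument has the same logical content as the paper's, namely an appeal to Piau; the balancing of $e^{-a_2\ell}$ against $e^{-a_1k/\ell^{\gamma}}$ at $\ell\asymp k^{1/(1+\gamma)}$ is indeed routine once $\gamma$ is supplied, but supplying it is the theorem. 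Two further points would need repair even as a sketch. First, your event $G_\ell$ excludes necks only ``within bounded distance of $\bp$,'' whereas a $k$-step walk can reach depth $k$; one must either exclude necks at all depths the walk can plausibly visit (and then the union bound costs a factor growing with $k$, not just with $\ell$) or first show that deep excursions contribute negligibly to $p_k(\bp;T)$ --- this is a genuine issue, not a formality. Second, the uniformity of $A,a$ on compact subintervals does not follow merely from ``continuity of $\theta(c)$ propagated through the estimates'' unless every intermediate constant is explicitly tracked in $c$; the paper instead deduces uniformity a posteriori from the pointwise statement together with the continuity of $\bar p_k$ in $c$, which is the cleaner route.
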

We remark that in \cite[Theorem 2]{\Piau98} the boundedness of the constants
is not claimed, just that constants exist for every $c>1$, but this implies the theorem since $\bar{p}_k$ is a continuous function of $c$.

We now fix a compact subinterval $I\subseteq(1,\infty)$ and shall work only
with $c=x+iy$ such that $x\in I$. All the constants $A,a>0$ appearing below
may depend on $I$ and it is understood that their value may change from
line to line: $A$ may increase, while $a$ may decrease.

We record for later use the well-known structure of the $\PGW^*(c)$
distribution, as was also discussed in Section
\ref{tree_domination_section}.
\begin{lemma} \label{structure_of_CPGW_tree_lemma}
The \emph{$\PGW^*(c)$} distribution is a 2-type Galton-Watson distribution, with
the types called I and F (for ``infinite" and ``finite"). For vertices of type I the number of type I children is distributed as $Q^*_{c(1-q(c))}$ and of type F children as $Q_{cq(c)}$. Vertices of type F have only type F children, the number of which is distributed as $Q_{cq(c)}$.
%The number of children of vertices of type I is distributed as $Q^*_{c(1-q(c))} + Q_{cq(c)}$ and the number of children of vertices of type F is distributed as $Q_{cq(c)}$. The root vertex is of type I.
\end{lemma}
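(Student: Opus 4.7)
The plan is to classify each vertex of a $\PGW^*(c)$ tree as type $\II$ or $\FF$ according to whether its subtree is infinite or finite, and then verify the two-type description by combining the branching property of the unconditioned $\PGW(c)$ tree with Poisson thinning.

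First I would work in an unconditioned $\PGW(c)$ tree. The root has $N\sim Q_c$ children and, given $N$, their subtrees are i.i.d.\ $\PGW(c)$, so each child is of type $\FF$ with probability $q(c)$ independently. By Poisson thinning, the counts $N_\II$ and $N_\FF$ of infinite and finite subtrees are independent, distributed respectively as $Q_{c(1-q(c))}$ and $Q_{cq(c)}$. Conditioning the whole tree on non-extinction is equivalent to conditioning on $N_\II\ge 1$, which by independence replaces $N_\II$ by $Q^*_{c(1-q(c))}$, leaves $N_\FF\sim Q_{cq(c)}$ untouched, and does not alter the conditional laws of the individual subtrees given their types.

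Second I would identify those conditional laws. Given that a child is of type $\II$, its subtree is a $\PGW(c)$ tree conditioned on survival, i.e.\ again $\PGW^*(c)$, which closes the recursion at type-$\II$ vertices. Given that a child is of type $\FF$, its subtree is a $\PGW(c)$ tree conditioned on extinction; a direct Bayes computation combined with the identity $q(c)=e^{-c(1-q(c))}$ of \rref e.extinct/ (equivalently $e^{-c}=q(c)e^{-cq(c)}$) gives
\[
\Pb{N=k \mid \text{extinction}}
=\frac{c^k e^{-c}}{k!}\cdot\frac{q(c)^k}{q(c)}
=\frac{(cq(c))^k e^{-cq(c)}}{k!},
\]
so the root of a type-$\FF$ subtree has $Q_{cq(c)}$ offspring. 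Since the property ``the subtree rooted at $v$ is finite'' passes from $v$ to all its descendants, an easy induction then shows that the whole type-$\FF$ subtree is distributed as an unconditioned $\PGW(cq(c))$ tree, all of whose vertices are therefore of type $\FF$.

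Finally, iterating these two observations down the generations, using the conditional independence of children's subtrees to propagate the branching property, yields precisely the two-type Galton-Watson description claimed in the lemma. The main (and essentially only) technical point is the Poisson-specific calculation that conditioning on extinction keeps the offspring distribution Poisson but with dual parameter $cq(c)$; everything else is straightforward bookkeeping around the branching property.
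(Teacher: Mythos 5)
Your argument is correct and is essentially the standard one: the paper states this lemma as well known without giving a proof, pointing back to its Section 2 discussion, which rests on exactly the same ingredients you use (Poisson thinning/marking of the root's children into those with infinite versus finite subtrees, conditioning on survival only biasing the count of infinite subtrees, and the duality $e^{-c}=q(c)e^{-cq(c)}$ showing that $\PGW(c)$ conditioned on extinction is $\PGW(cq(c))$). Nothing in your write-up deviates from that route, and the two minor informalities (the marking property giving independence of the two marked families, and the induction for the type-F subtrees) are filled in by completely standard facts.
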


We next introduce the notion of the \emph{trace of a random walk path}.
This is all the information about a path on a tree that starts at its root.
The trace includes the following information per step:
\begin{enumerate}
\item whether the step is up or down (up is away from the root);
\item if the step is up, whether it is to a type-I child or to a type-F
child and which such child is it (e.g., the first type-I child, the second
type-F child, etc.).
\end{enumerate}
We denote by $\W_k$ the set of all traces that have exactly $k$ steps and
end at the root. Given $W\in\W_k$, let $l_{\II}(W), l_{\FF}(W)$ be the number of
distinct vertices of types I and F, respectively, that the trace visits, so
that $l_{\II}(W)+l_{\FF}(W)\le k$. Given a tree $T$, the trace $W$ may be
feasible on $T$ or not: It is feasible if and only if all the vertices that
$W$ visits exist in $T$ (e.g., if on the first step, $W$ moves to the third
type-F child of the root, then the root of $T$ must have at least 3
type-F children). Let $d_{\II}(W):=(d_1^\FF,d_1^\II,\ldots,
d_{l_{\II}(W)}^\FF,d_{l_{\II}(W)}^\II)$ be the minimum required number of children of type F and type I from each
of the vertices of type I that $W$ visits in order for the walk to be
feasible. Here,
the subscript $i$ indicates the $i$th distinct vertex of type I visited by $W$. Similarly, let $d_{\FF}(W):=(\tilde{d}_1^{\FF},\ldots, \tilde{d}_{l_{\FF}(W)}^{\FF})$ be the minimum required number of children of type F from the vertices of type F that $W$ visits. 
Given two vectors $e$ and $d$ of the same length,
we write $e\succeq d$ if each coordinate of $e$ is
greater than or equal to the corresponding coordinate of $d$.
Finally, denote by $p^c(W)$ the probability under $\PGW^*(c)$ to sample a
feasible tree for $W$ and then to sample $W$ as a simple random walk path
of length $k$ on that tree. From all the above discussion, we have
\begin{equation} \label{main_probability_sum}
\bar{p}_k(c)=\sum_{W\in\W_k} p^c(W) = \sum_{W\in\W_k}\sum_{\substack{e_{\II}\succeq d_{\II}(W)\\e_{\FF}\succeq d_{\FF}(W)}}
p^c(e_{\II},e_{\FF})p(W,e_{\II},e_{\FF}),
\end{equation}
where $p^c(e_{\II},e_{\FF})$ is the probability to sample a tree in which the vertices
that $W$ passes through have exactly the prescribed number of children $e_{\II},e_{\FF}$
of each type, and $p(W,e_{\II},e_{\FF})$ is the conditional probability,
given $e_{\II}$ and $e_{\FF}$, to sample $W$ as a
simple random walk path on the tree. We emphasize that
$p(W,e_{\II},e_{\FF})$ does not depend on $c$, while $p^c(e_{\II},e_{\FF})$
is the same for all $W$ that satisfy $e_{\II}\succeq
d_{\II}(W)$ and $e_{\FF}\succeq d_{\FF}(W)$.

Since $c$ and $q$ are analytically related for $c\in(1,\infty)$ by \rref
e.extinct/ (which can be rewritten as $c=-\frac{\log(q)}{1-q}$),
there is a unique extension of $q(c)$ to an analytic function of $c$
for $\Re c > 1$ and $|\Im c| \le \kappa(\Re c)$ for some continuous
function $\kappa : (1, \infty) \to (0, \infty)$. 
(In fact, one can extend it much further, but we shall not need that.)
Hence, the same holds for $p^c(e_\II, e_\FF)$.
We shall use the same notations for the original functions as
for these extensions, and likewise for similar functions below.

Note that to prove Theorem \ref{analytic_bound_p_k}, it is enough to show
that for $c=x+iy$ with $x\in I$ and $|y|\le \kappa(x) k^{-\beta}$, the sum
\eqref{main_probability_sum} converges uniformly and is bounded by
$A\exp(-ak^{1/6})$. Denote by $\max(e)$ the maximal element of $e$. We continue with
\begin{lemma}\label{complex_to_real_lemma}
If $c=x+iy$ with $x\in I$ and $|y| \le \kappa(x)$, then
$$
|p^c(e_{\II},e_{\FF})|\le p^x(e_{\II},e_{\FF})e^{Ak(\max(e_{\II})+\max(e_{\FF})+1)|y|}.
$$
\end{lemma}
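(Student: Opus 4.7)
The plan is to factor $p^c(e_\II, e_\FF)$ as a finite product of elementary Poisson-type expressions, one per vertex visited by $W$, and then control the ratio $|p^c(e_\II,e_\FF)|/p^x(e_\II,e_\FF)$ factor by factor. By the two-type Galton--Watson description in Lemma~\ref{structure_of_CPGW_tree_lemma} together with independence of offspring counts at distinct vertices, each of the $l_\II(W)$ type-I vertices visited by $W$ contributes a factor
$$
\frac{(c\theta(c))^{e_i^\II}\,e^{-c\theta(c)}}{e_i^\II!\,(1-e^{-c\theta(c)})}\cdot
\frac{(cq(c))^{e_i^\FF}\,e^{-cq(c)}}{e_i^\FF!},
$$
while each of the $l_\FF(W)$ type-F vertices contributes
$$
\frac{(cq(c))^{\tilde e_i^\FF}\,e^{-cq(c)}}{\tilde e_i^\FF!}.
$$
The total number of factors is thus at most $k$, since $l_\II(W)+l_\FF(W)\le k$.

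The analytic input comes from the extensions of $q(c)$ and $\theta(c)=1-q(c)$ discussed just before the lemma. Shrinking $\kappa$ if necessary and using compactness of $I\subset(1,\infty)$, there is $A>0$ depending only on $I$ such that for $x\in I$ and $|y|\le\kappa(x)$,
$$
|\lambda(c)-\lambda(x)|\le A|y|,\qquad \lambda(x)\ge A^{-1},\qquad 1-e^{-\lambda(x)}\ge A^{-1},
$$
where $\lambda$ stands for either $c\theta(c)$ or $cq(c)$. Elementary manipulation then yields
$$
\Bigl|\frac{\lambda(c)}{\lambda(x)}\Bigr|\le e^{A'|y|},\qquad
\bigl|e^{-\lambda(c)+\lambda(x)}\bigr|\le e^{A|y|},\qquad
\Bigl|\frac{1-e^{-\lambda(x)}}{1-e^{-\lambda(c)}}\Bigr|\le e^{A'|y|},
$$
so each factor of the form $\lambda(c)^n e^{-\lambda(c)}/n!$, with or without a $(1-e^{-\lambda(c)})$ in the denominator, is bounded in modulus by $e^{A''(n+1)|y|}$ times the corresponding factor at $x$.

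Multiplying these factor-wise bounds, and observing that the exponent $n$ appearing in any single factor is at most $\max(e_\II)$ (when $\lambda=c\theta(c)$) or at most $\max(e_\FF)$ (when $\lambda=cq(c)$), while there are at most $k$ factors in total, gives
$$
\frac{|p^c(e_\II,e_\FF)|}{p^x(e_\II,e_\FF)}\le \exp\!\Bigl(Ak\bigl(\max(e_\II)+\max(e_\FF)+1\bigr)|y|\Bigr),
$$
which is the desired estimate. The step requiring the most care is not conceptually hard but is a bookkeeping point: one must verify uniformity of constants on $I$, i.e.\ that $\theta(x)$, $q(x)$, $x\theta(x)$, $xq(x)$, and $1-e^{-x\theta(x)}$ are all bounded above and away from zero on $I$, which follows immediately from continuity and the strict positivity of $\theta(x)$ for $x>1$.
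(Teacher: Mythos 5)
Your proposal is correct and follows essentially the same route as the paper: factor $p^c(e_\II,e_\FF)$ vertex-by-vertex using the two-type Galton--Watson structure, use analyticity of $q$ (hence of $c\theta(c)$ and $cq(c)$) to get $|q(c)-q(x)|\le A|y|$ with constants uniform on the compact interval $I$, bound each elementary factor by $e^{A(n+1)|y|}$ times its value at $x$, and multiply over the at most $k$ visited vertices. The bookkeeping differences (your per-factor ratio bounds versus the paper's separate estimates on $|c|^{j+m}$, $|1-q|^j$, $|q|^m$, and the normalizing denominator) are immaterial.
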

\begin{proof}
Using the structure Lemma \ref{structure_of_CPGW_tree_lemma}, we know that
\begin{equation*}
p^c(e_{\II},e_{\FF})=\prod_{i=1}^{l_{\II}(W)}p^c(e_{\II},i)\prod_{i=1}^{l_{\FF}(W)}p^c(e_{\FF},i)
\end{equation*}
(the values $l_{\II}(W)$ and $l_{\FF}(W)$ are implicit in $e_{\II}$ and
$e_{\FF}$ as their lengths), where
\begin{equation*}
\begin{split}
p^c(e_{\II},i) &= \P(Q^*_{c(1-q(c))}=e_i^\II)\P(Q_{cq(c)}=e_i^\FF)\,,\\
p^c(e_{\FF},i) &= \P(Q_{cq(c)}=\tilde{e}_i^\FF)
\,.
\end{split}
\end{equation*}
More explicitly, denoting $j:=e_i^\II, m:=e_i^\FF$ and $n:=\tilde{e}_i^{\FF}$ and abbreviating
$q:=q(c)$, we have
\begin{equation*}
\begin{split}
p^c(e_{\II},i) &=
e^{-c(1-q)}\frac{(c(1-q))^j}{j!(1-e^{-c(1-q)})}e^{-cq}\frac{(c q)^m}{m!}\,,\\
p^c(e_{\FF},i) &= e^{-c q}\frac{(c q)^n}{n!}\,.
\end{split}
\end{equation*}
We have in the first case
\begin{equation}\label{vertex_bound}
|p^c(e_{\II},i)|\le
\frac{e^{-x}}{j!m!}|c|^{j+m}|1-q|^j|q|^m\frac{1}{|1-e^{-c(1-q)}|}\,.
\end{equation}
We know that when $x\in I$ and $|y| \le \kappa(x)$, we have $|q(x+iy) - q(x)|\le A|y|$ since $q$ is an analytic function of $c$. Hence
$$
|c|^{j+m} = x^{j+m}\left|1+\frac{y^2}{x^2}
\right|^{\frac{j+m}{2}}\le x^{j+m}e^{Ay^2(j+m)}\,,
$$
$$
|1-q(x+iy)|^j \le (1-q(x))^j \left(1+\frac{A|y|}{1-q(x)}\right)^j\le
(1-q(x))^je^{Aj|y|}\,,
$$
$$
|q(x+iy)|^m \le q(x)^m \left(1+\frac{A|y|}{q(x)}\right)^m\le
q(x)^me^{Am|y|}\,,
$$
and
$$
\begin{aligned}
\frac{1}{|1-e^{-(x+iy)(1-q(x+iy))}|}&\le \frac{1}{1-e^{-x(1-q(x))}} + A|y| \le \frac{1}{1-e^{-x(1-q(x))}}(1+A|y|)\\
&\le \frac{e^{A|y|}}{1-e^{-x(1-q(x))}}.
\end{aligned}
$$
Substituting back into \eqref{vertex_bound}, we get
$$
|p^{c}(e_{\II},i)|\le p^{x}(e_{\II},i)e^{A(j+m+1)|y|}\,.
$$
This bound was for $p^c(e_{\II},i)$, but we also obtain analogously that $|p^{c}(e_{\FF},i)|\le p^{x}(e_{\FF},i)e^{A(n+1)|y|}$. Hence
$$
|p^c(e_{\II}, e_{\FF})|\le p^x(e_{\II},e_{\FF})e^{A(l_{\II}(W)+l_{\FF}(W))(\max(e_{\II}) + \max(e_{\FF}) +1)|y|}\le p^x(e_{\II},e_{\FF})e^{Ak(\max(e_{\II})+\max(e_{\FF})+1)|y|}
.
\QED
$$
\end{proof}

To continue, say that a vertex of a tree is $L$-big if it has either exactly
$L$ type-I children or exactly $L$ type-F children or both. Let $E_{k,L}$
be the event that if we sample a tree and do a simple random walk on it
(from the root), then the walk returns to the root after exactly $k$ steps
and visits an $L$-big vertex along the way but does not visit an $M$-big
vertex along the way for any $M>L$. We observe that
\begin{equation}\label{large_L_bound}
\sum_{W\in\W_k}\sum_{\substack{e_{\II}\succeq d_{\II}(W)\\e_{\FF}\succeq d_{\FF}(W)\\\max(\max(e_{\II}),\max(e_{\FF}))=L}} p^x(e_{\II},e_{\FF})p(W,e_{\II},e_{\FF}) =
\PGW^*_x(E_{k,L})\le Ake^{-aL\log L},
\end{equation}
where the last inequality follows since there are no more than $k$ vertices
along any path $W$ and since the tails of a Poisson$(c)$ random variable decay as $Ae^{-aL\log L}$, even when conditioned to be at least 1.

Thus, we find that if $c=x+iy$ with $x\in I$ and $|y|\le \kappa(x)/k$, then
from Lemma \ref{complex_to_real_lemma} (for $k$ large enough as a function
of $I$), we have
$$
\begin{aligned}
|\bar{p}_k(c)| &\le \sum_{W\in\W_k}\sum_{\substack{e_{\II}\succeq d_{\II}(W)\\e_{\FF}\succeq d_{\FF}(W)}} |p^c(e_{\II},e_{\FF})|p(W,e_{\II},e_{\FF})\\
&\le \sum_{W\in\W_k}\sum_{L=1}^\infty e^{A(L+1)}\sum_{\substack{e_{\II}\succeq d_{\II}(W)\\e_{\FF}\succeq d_{\FF}(W)\\\max(\max(e_{\II}),\max(e_{\FF})))=L}}p^x(e_{\II},e_{\FF})p(W,e_{\II},e_{\FF})\\
&\le\underbrace{e^{A\delta k^{1/6}}\sum_{W\in\W_k}\sum_{L\le \delta
k^{1/6}}(\cdots)}_{(C)}+\underbrace{\sum_{W\in\W_k}\sum_{L>\delta
k^{1/6}}e^{AL}(\cdots)}_{(D)}\,.
\end{aligned}
$$
By \eqref{large_L_bound}, we have
$$
(D)\le Ak\sum_{L>\delta k^{1/6}} e^{AL-aL\log L} \le Ae^{-a\delta
k^{1/6}\log k}\,,
$$
and by Theorem \ref{p_k_bound_thm}, we have
$$
(C)\le e^{A\delta k^{1/6}}\bar{p}_k(x)\le Ae^{(A\delta-a)k^{1/6}} \le
Ae^{-ak^{1/6}}\,,
$$
where the last inequality follows by taking $\delta$ small enough (as a
function of $I$). Putting everything together, we get
$$
|\bar{p}_k(c)|\le Ae^{-ak^{1/6}}.
$$
The calculation was made for $k$ large enough as a function of $I$, but the inequality will be true for smaller $k$ as well by taking $A$ large enough.
This completes the proof of Theorem \ref{analytic_bound_p_k}.
\end{section}

\section{Derivative} \label{deriv}

By \rref t.return/ and \rref e.treeent/, we have
$$
f'(c) \ge
\frac{d}{d c}
\int \log \deg_T(\bp) \,d\PGW^*_c(T, \bp)
=
\frac{d}{d c}
\sum_{k \ge 1} \frac{e^{-c} c^k (1 - q(c)^k) \log k}{\theta(c) k!}
\,.
$$
Although this lower bound appears to be a fairly simple expression, the
presence of the logarithm makes it hard to evaluate.
For that reason, it seems desirable to have a more explicit lower bound.

Write $r_k(c)$ for the probability that the root has degree $k$ under
the $\PGW^*(c)$ distribution.
We seek a lower bound for
$$
\begin{aligned}
\sum_{k \ge 1} r'_k(c) \log k
&=
\sum_{k \ge 0} r'_k(c) \log^+ k
=
\sum_{k \ge 0} s'_k(c) [\log^+ (k+1) - \log^+ k]
\\ &
=
\sum_{k \ge 1} s'_k(c) \log \frac{k+1}{k}
>
\sum_{k \ge 1} s'_k(c) \frac{1}{k+1}
\,,
\end{aligned}
$$
where $s_k(c) := \sum_{j > k} r_j(c)$ and we have used Lemma~\ref{l.les} for the
fact that $s'_k(c) \ge 0$ (i.e., the degree distribution of the root under
$\PGW^*(c)$ is stochastically increasing in $c$).
Now the degree of the root has the same law as $X_c := Q^*_{c \theta(c)} +
Q_{c q(c)}$.
Let $c > 1$ and $\delta > 0$.
Define
$$
g(c, \delta)
:=
\alpha\big(c \theta(c), (c+\delta)\theta(c+\delta)\big) -
[c q(c) - (c+\delta)q(c+\delta)]
\,.
$$
By \rref l.ders/, we have $g(c, \delta) > 0$.
By Lemma~\ref{l.decomp}, we have that 
$X_{c+\delta}$ stochastically dominates 
$$
Q^*_{c \theta(c)} +
Q_{\alpha(c \theta(c), (c+\delta)\theta(c+\delta))} +
Q_{(c+\delta)q(c+\delta)}
\,,
$$
which has the same distribution as $X_c + Y_{c, \delta}$, 
where $Y_{c, \delta} := Q_{g(c, \delta)}$ is independent of $X_c$.
Therefore,
$$
s_k(c+\delta) - s_k(c)
=
\Pb{X_{c+\delta} > k} - \Pb{X_c > k}
\ge
\Pb{X_c + Y_{c, \delta} > k} - \Pb{X_c > k}
\,.
$$
It follows that
$$
s'_k(c)
\ge
r_k(c) \beta(c)
\,,
$$
where
$$
\beta(c)
:=
\lim_{\delta \to 0} g(c, \delta)/\delta
\,.
$$
By \rref e.simpler/ and \rref e.duality/, we have that 
$$
\begin{aligned}
g(c, \delta)
&=
\log \big(c q(c) e^{-c q(c)}\big) - \log \big((c+\delta) q(c+\delta)
e^{-(c+\delta) q(c+\delta)}\big)
\\ &=
\log \big(c e^{-c}\big) - \log \big((c+\delta) e^{-(c+\delta)}\big)
=
\log c - c - \log (c+\delta) + c+\delta
\,.
\end{aligned}
$$
Therefore,
$$
\beta(c)
=
1 - \frac{1}{c}
\,.
$$
Thus, we obtain
$$
\begin{aligned}
f'(c)
&>
\sum_{k \ge 1} \frac{r_k(c) \beta(c)}{k+1}
=
\frac{e^{-c} \beta(c)}{\theta(c)}
\int_0^1 \Big(e^{c s} - e^{c s q(c)}\Big) \,d s
\\ &=
\left(
1 - \frac{1}{c}
\right)
\left(\frac{1 - e^{-c}}{c \theta(c)} - \frac{e^{-c \theta(c)} - e^{-c}}{c
q(c) \theta(c)}\right)
\\ &=
\frac{(c-1)e^{-c q(c) }}{c^2}
> 0
\,.
\end{aligned}
$$
This completes the proof of \rref t.incrH/.

\section{Open problems}

A number of questions suggest themselves in light of our results, some of
which arose in conversation with Itai Benjamini. 
\begin{enumerate}
\item 
Given two finite graphs $H$ and $G$,
say that $H \preccurlyeq G$ if there is a coupling of uniform
vertices $Y$ of $H$ and $X$ of $G$ such that there is an isomorphism
$\varphi$ of the
component of $Y$ in $H$ to a subgraph of $G$ such that $\varphi(Y) = X$. 
Let $\G(n, M)$ denote the random graph on $n$ vertices with $M$ edges.
Write $\G^*(n, M)$ for the union of all components of $\G(n, M)$ that have
the maximum number of edges (the maximum being taken over all the
components of $\G(n, M)$; for large $M$, there is likely to be only one
such component).
One very strong finitary version of Theorem \ref{t.main} would say that
$\G^*(n, M) \preccurlyeq \G^*(n, M+1)$ for $M < \binom{n}{2}$.
Does this hold?
\item
Consider a $(d+1)$-regular tree and $p_2 > p_1 > 1/d$. Let $T(p)$ denote the
component of the root under Bernoulli($p$) percolation conditioned on the
event that this component is infinite. Does $T(p_2)$ stochastically
dominate $T(p_1)$? 
Unpublished work of Erik Broman and the first author here shows that
for the (slightly different) case of $d$-ary trees, this holds for $d = 2, 3$.
\item
More generally, let $G$ be a transitive graph, especially such as $\Z^d$,
and $p_2 > p_1 > \pc(G)$, where $\pc(G)$ is the critical probability for
Bernoulli (bond or site) percolation on $G$. Fix $o \in G$ and
let $G(p)$ denote the component of $o$
given that it is infinite. Does $G(p_2)$ stochastically dominate $G(p_1)$?
If this holds, then there is a weak limit of $G(p)$ as $p \downarrow
\pc(G)$, which could be called the incipient infinite cluster. (It is
conjectured that there is no infinite component at $\pc(G)$; see
\cite{\BSpyond}.)
Such a limit is not known to exist in $\Z^d$ for $d \ge 3$, although
another incipient infinite cluster has been constructed for $d \ge 19$ by
\cite{\HoJa}. 
\item
Again, if $G$ is a transitive graph, $o \in G$,
and $n \ge 1$, let $T_n$ denote a
uniformly chosen random subtree of $G$ rooted at $o$ and with $n$ vertices.
Is $T_n \preccurlyeq T_{n+1}$? 
\item
Let $\rtd_1$ and $\rtd_2$ be two Galton-Watson measures on rooted trees.
If $k \ge 2$ and $\rtd_1 \preccurlyeq \rtd_2$, then is it necessarily the
case that $\int p_k(\bp; T) \,d\rtd_1(T, \bp) \ge
\int p_k(\bp; T) \,d\rtd_2(T, \bp)$?
\end{enumerate}
We thank Yuval Peres for several conversations.

\bibliographystyle{halpha}
\bibliography{mr,prep}

\end{document}